\theoremstyle{plain}
\newtheorem{definition}[equation]{Definition}
\newtheorem{corollary}[equation]{Corollary}
\newtheorem{lemma}[equation]{Lemma}
\newtheorem{prop}[equation]{Proposition}
\newtheorem{theorem}[equation]{Theorem}
\theoremstyle{definition}
\newtheorem{remark}[equation]{Remark}
\numberwithin{equation}{subsection}
\newcommand{\C}{\mathbb C}
\newcommand{\rad}{{\mathrm{rad}}}
\newcommand{\Ad}{{\mathrm{Ad}}}
\newcommand{\ZZ}{\mathbb Z}
\def \ann {{\rm Ann}}
\def\E{{\mathsf{E}}}
\def\p{{\mathfrak p}}
\def\b{{\mathfrak b}}
\begin{document}
\title
[A Lie-theoretic construction of symplectic reflection algebras]
{A Lie-theoretic construction of spherical symplectic reflection algebras}

\author{P. Etingof, S. Loktev, A. Oblomkov, L. Rybnikov }
\address{Department of Mathematics, Massachusetts Institute of
Technology, Cambridge, MA 02139, U.S.A.}
\email{etingof@math.mit.edu}
\address{Institute for Theoretical and Experimental Physics, B.Cheremushkinskaya ul., 25, Moscow 117218, Russia}
\email{loktev@itep.ru}
\address{Department of Mathematics, Princeton University, Princeton, NJ, U.S.A}
\email{oblomkov@math.princeton.edu}
\address{Institute for Theoretical and Experimental Physics, B.Cheremushkinskaya ul., 25, Moscow 117218, Russia}
\email{leo.rybnikov@gmail.com}

\maketitle{}

\def \qr {{\mathfrak A}}
\def \g {{\mathfrak g}}
\def \a {{\mathfrak a}}
\def \l {{\mathfrak l}}
\def \gl {{\mathfrak {gl}}}
\def \gs {{\mathfrak {sl}}}
\def \D {{\mathcal D}}
\def \rep {{\rm Rep}}
\def \lie {{\rm Lie}\,}
\def \fl {{\mathcal{F}\ell}}
\def \tfl {\widetilde{\fl}}
\def \ad {{\rm ad}}
\def \gn {{\bf \Gamma}_n}
\def \i {{\bf i}}
\def \s {{\bf s}}
\def \o {{\bf o}}
\def \n {{\bf n}}
\def \E {{\mathcal E}}
\def \Fun {{\mathcal Fun}}
\def \tr {{\rm Tr}}
\def \hom {{\rm Hom}}
\def \inc {{\rm Inc}}
\def \enm {{\rm End}}
\def \itw {{\mathfrak I}}
\def \fou {{\mathfrak F}}

\begin{abstract}
We propose a construction of the spherical subalgebra of a symplectic
reflection algebra of an arbitrary rank corresponding to a star-shaped affine
Dynkin diagram. Namely, it is obtained from the universal
enveloping algebra of a certain semi-simple Lie algebra
by the process of quantum Hamiltonian reduction. 
As an application, we propose a construction of finite-dimensional
representations of the spherical subalgebra. 
\end{abstract}

\section*{Introduction}

The main result of this paper is the realization of the spherical
subalgebra of the wreath product symplectic reflection algebra of
rank $n$ of types $D_4,E_6,E_7,E_8$, introduced in \cite{EG}, as a
quantum Hamiltonian reduction of the tensor product of $m$
quotients of the enveloping algebra $U({{\mathfrak{sl}}}_{n\ell})$,
where $\ell$ is 2, 3, 4, and 6, and $m$ is 4, 3, 3, and 3,
respectively. This allows one to define a functor which attaches
a representation of the spherical symplectic reflection algebra
to a collection of $m$ representations of ${{\mathfrak
{sl}}}_{n\ell}$, which are annihilated by certain ideals. 
In particular, this gives an explicit Lie-theoretic construction of many finite
dimensional representations of spherical symplectic reflection
algebras, most of which appear to be new. In the rank 1 case, all
finite dimensional representations of spherical symplectic
reflection algebras are classified by Crawley-Boevey and Holland
(\cite{CBH}) and our construction yields several explicit
Lie-theoretic realizations of all of them.

The proof of the main result is based on the previous work \cite{EGGO},
in which the spherical subalgebra of a wreath product symplectic
reflection algebra (of any type) is realized as the quantum
Hamiltonian reduction from the algebra of differential
operators on representations (with a certain dimension vector) 
of the Calogero-Moser quiver, obtained from the corresponding 
extended Dynkin quiver by adjoining an auxiliary vertex, linked 
to the extending vertex. Namely, in the case when the extended 
Dynkin graph is star-shaped  
(which happens in the cases $D_4,E_6,E_7,E_8$), this reduction can be
performed in 2 steps: first the reduction with respect to the
groups of basis changes at the non-central vertices, and then 
with respect to the group of basis changes at the central 
(branching) vertex of the star. By the localization theorem for
partial flag varieties, after the first step, we obtain a tensor
product of $m$ quotients of enveloping algebras (the factors 
correspond to the branches of the extended Dynkin diagram), which
yields the result. 

Recall that by the results of \cite{EG}, a wreath product spherical symplectic
reflection algebra can be viewed as a quantization
of the wreath product Calogero-Moser space, which is a
deformation of the $n$-th Hilbert scheme of the resolution of a
Kleinian singularity. Our main result is a quantum analog of the 
statement from classical symplectic geometry, stating that 
this wreath product Calogero-Moser space may be obtained by
classical Hamiltonian reduction from a product of $m$ coadjoint
orbits of the Lie algebra ${{\mathfrak{sl}}}_{n\ell}$, or,
equivalently, as the space of solutions of a special kind of the additive
Deligne-Simpson problem. This classical result can be found in
\cite{EGO}, Section 2.6. 

We expect that the main result of this paper has a q-deformed analog, in which
the Lie algebra ${{\mathfrak{sl}}}_{n\ell}$ is replaced by the
corresponding quantum group $U_q({{\mathfrak{sl}}}_{n\ell})$,
and spherical symplectic reflection algebras are replaced by
spherical subalgebras of generalized double affine Hecke algebras
(DAHAs), introduced for higher rank in \cite{EGO} 
and for rank 1 in \cite{EOR}
(except the $D_4$ case, where they were known before
due to the work of Sahi and Stokman, \cite{Sa,St}).
Such a result would be a quantization of the result 
of Section 5.2 of \cite{EGO}, which is a multiplicative 
version of the abovementioned result from Section 2.6,
as it is concerned with the multiplicative, rather than additive,
Deligne-Simpson problem. In fact, we can construct 
the corresponding functor, which attaches a representation 
of the spherical subalgebra 
of the generalized DAHA to a collection of $m$ representations 
of the quantum group annihilated by certain ideals. This, as well as
non-spherical versions of the results of this paper, will be  
discussed in future publications. 

The paper is organized as follows. 
In Section 1, we introduce the notation and formulate the main result
(Theorem~\ref{thmain}). We prove it in Section 2. 
In Section 3, we use our construction to produce finite
dimensional representations of spherical symplectic reflection
algebras, starting from known classes of Lie algebra representations.

{\bf Acknowledgements.} It is our pleasure to dedicate
this paper to the 80-th birthday of Bertram Kostant;
to a significant extent, its results rest on the ideas
stemming from the orbit method. We are grateful to
A. Braverman, B.~Feigin, V.~Ginzburg, I.~Gordon, D.~Kazhdan,
A.~Levin, M.~Olshanetsky, E.~Rains, and D.~Vogan for very useful and
stimulating discussions. The work of the authors was partially
supported by CRDF grant RM1-2545-MO-03. The work of P.E. was
partially supported by the NSF grant DMS-0504847.
The work of S. L. was
supported by the RF President Grant N.Sh-8004.2006.2,
grant RFBR-08-02-00287 and the P.Deligne
scholarship.
The work of L. R. was supported by the grant RFBR-CNRS-07-01-92214
and the P.Deligne
scholarship.

\section{The Main Theorem}

\subsection{Quantum Hamiltonian Reduction}

The following construction is the quantum analog of the Hamiltonian 
reduction procedure.

Let $A$ be an associative algebra, $\g$ be a Lie algebra, $\mu:
\g\to A$ be a homomorphism of Lie algebras.

\begin{definition}
Define the associative algebra
$$\qr (A, \g, \mu) = \left(A / A\mu(\g)\right) ^{
\g},$$
where the invariants are taken with respect to the adjoint action
of $\g$ on $A$. 
This algebra is called the {\bf quantum Hamiltonian reduction}
of $A$ with respect to $\g$ with quantum moment map $\mu$.
\end{definition}

The following proposition is well known, but we give its proof
for reader's convenience. It summarizes the main properties of
the quantum Hamiltonian reduction, and gives a construction of its
representations. 

\begin{prop}\label{seqred}
Assume that $\g$ is reductive, and 
the adjoint action of $\g$ on $A$ is completely reducible. 
Then: 

(i)
$$
\qr (A, \g, \mu) = A^{\g} / (A\mu(\g))^{\g}=
A^{\g} / (\mu(\g)A)^{\g}=
(A/\mu(\g) A)^{\g}.
$$

(ii)
If $V$ is any $A$-module then 
$\qr (A, \g, \mu)$ acts naturally 
on the cohomology $H^i(\g,V)$ and the homology 
$H_i(\g,V)$, in particular, on the 
invariants $V^\g$ and the coinvariants $V_\g$. 

(iii) Suppose that $\g = \g_1 \oplus \g_2$, so $\mu = \mu_1
\oplus \mu_2$.  Then $\mu_2$ descends to a map
$\overline{\mu_2}:\g_2\to \qr (A, \g_1, \mu_1)$ and we have
$$\qr (A, \g, \mu) \cong \qr\left( \qr (A, \g_1, \mu_1), \g_2,
\overline{\mu_2}\right).$$

\end{prop}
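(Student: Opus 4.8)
The plan is to derive all three parts from a single structural fact: since $\g$ is reductive and acts completely reducibly on $A$, there is a canonical Reynolds projector $e\colon A\to A^\g$ onto the trivial isotypic component, which is a $\g$-module map, restricts to the identity on $A^\g$, and annihilates every nontrivial isotypic piece; equivalently, the functor of $\g$-invariants is exact on completely reducible modules and $e(M)=M^\g=M\cap A^\g$ for every $\g$-submodule $M\subseteq A$. For (i), I would first record that the adjoint action is $\ad_x(a)=\mu(x)a-a\mu(x)$, so that $A\mu(\g)$ and $\mu(\g)A$ are $\g$-stable (a one-line check from $\ad_x(a\mu(y))=\ad_x(a)\mu(y)+a\mu([x,y])$). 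Exactness of invariants applied to these two $\g$-stable subspaces yields the outer equalities $(A/A\mu(\g))^\g=A^\g/(A\mu(\g))^\g$ and $(A/\mu(\g)A)^\g=A^\g/(\mu(\g)A)^\g$. The middle equality reduces to $(A\mu(\g))^\g=(\mu(\g)A)^\g$, which follows from $a\mu(x)-\mu(x)a=-\ad_x(a)\in\ker e$: applying $e$ to an invariant element of one subspace lands it in the other.

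For (ii) I would handle $H^0=V^\g$ and $H_0=V_\g$ directly and then bootstrap. If $a\in A^\g$ then $\mu(x)a=a\mu(x)$, so left multiplication by $a$ is a $\g$-module endomorphism of $V$; hence it preserves $V^\g=\{v:\mu(\g)v=0\}$ and descends to $V_\g=V/\mu(\g)V$. Any element of $A\mu(\g)$ kills $V^\g$, and any element of $\mu(\g)A$ sends $V$ into $\mu(\g)V$, so the reduction ideal $J:=(A\mu(\g))^\g=(\mu(\g)A)^\g$ acts by zero on both, giving an action of $\qr(A,\g,\mu)=A^\g/J$. In higher degree I would compute $H^\bullet$ and $H_\bullet$ by the Chevalley--Eilenberg complex $\Lambda^\bullet\g\ot V$ (resp.\ its dual). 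Since $a\in A^\g$ commutes with the $\g$-action on $V$, acting on the $V$-slot is a chain map and induces an action of $A^\g$ on (co)homology; and for $\omega=\sum_k\mu(x_k)b_k\in J$ the insertion operator $h(\xi\ot v)=\sum_k (x_k\wedge\xi)\ot b_k v$ is a contracting homotopy with $\partial h+h\partial=\omega$, the invariance $\omega\in A^\g$ being exactly what cancels the bracket terms. Thus $J$ acts nullhomotopically and $\qr(A,\g,\mu)$ acts on all of $H^i(\g,V)$ and $H_i(\g,V)$.

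For (iii), since $[\g_1,\g_2]=0$ one has $\mu_2(\g_2)\subseteq A^{\g_1}$ (because $\ad_x\mu_2(y)=\mu([x,y])=0$ for $x\in\g_1$), so composing $\mu_2$ with the projection $A^{\g_1}\to B:=\qr(A,\g_1,\mu_1)$ produces the Lie homomorphism $\overline{\mu_2}$. I would then prove the isomorphism by realizing both sides as explicit quotients of $A^\g=(A^{\g_1})^{\g_2}$. Writing $B=A^{\g_1}/J_1$ with $J_1=(A\mu_1(\g_1))^{\g_1}$ and applying (i) to $B$, a chase using exactness and additivity of $\g_2$-invariants (via the projector $e_2$) gives
$$\qr(B,\g_2,\overline{\mu_2})=A^\g\big/\big((A^{\g_1}\mu_2(\g_2))^{\g_2}+J_1^{\g_2}\big).$$
On the other side $\qr(A,\g,\mu)=A^\g/(A\mu(\g))^\g$, and by additivity $(A\mu(\g))^\g=(A\mu_1(\g_1))^\g+(A\mu_2(\g_2))^\g$, where $(A\mu_1(\g_1))^\g=J_1^{\g_2}$ immediately. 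The crucial point is the identity $(A\mu_2(\g_2))^{\g_1}=A^{\g_1}\mu_2(\g_2)$: right multiplication by the $\g_1$-invariant element $\mu_2(y)$ is $\g_1$-equivariant and hence maps the non-invariant part $A'_1$ into itself, so $(A\mu_2(\g_2))^{\g_1}=A\mu_2(\g_2)\cap A^{\g_1}=A^{\g_1}\mu_2(\g_2)$; applying $\g_2$-invariants gives $(A\mu_2(\g_2))^\g=(A^{\g_1}\mu_2(\g_2))^{\g_2}$. The two denominators then coincide, and since everything is presented as a quotient of the algebra $A^\g$ by a two-sided ideal, the resulting bijection is an algebra isomorphism.

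I expect the main obstacle to lie in (iii): keeping the two iterated reductions honest as algebra (not merely vector-space) quotients, and in particular establishing the commutation identity $(A\mu_2(\g_2))^{\g_1}=A^{\g_1}\mu_2(\g_2)$ together with the additivity of the invariants functor on which several steps silently rely. A subsidiary point that must be checked is that $\g_2$, as a direct factor of the reductive $\g$, still acts completely reducibly on $A$ and on the subquotient $B$, so that both (i) and the Reynolds projector $e_2$ remain available at the second stage of the reduction.
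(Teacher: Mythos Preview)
Your proposal is correct, and for parts (ii) and (iii) it is considerably more detailed than the paper, which dispatches (ii) in two lines by invoking functoriality and derived functors, and (iii) in one line (``follows easily from the fact that $\g$ is reductive and acts completely reducibly on $A$''). Your explicit ideal-chasing in (iii) and your Chevalley--Eilenberg homotopy in (ii) are legitimate ways to flesh these out; one small refinement in (ii): the cancellation of the bracket terms in $\partial h+h\partial$ requires not merely $\omega\in A^\g$ but a $\g$-invariant lift $\sum x_k\otimes b_k\in(\g\otimes A)^\g$, which exists by complete reducibility but is a slightly stronger statement than what you wrote.

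The genuinely different step is in (i), the equality $(A\mu(\g))^\g=(\mu(\g)A)^\g$. The paper lifts an invariant element to $\sum a_i\otimes b_i\in(A\otimes\g)^\g$ and then computes with structure constants, using $\sum_i c_{pi}^i=\tr(\ad b_p)=0$ (reductivity) to deduce $\sum_i[b_i,a_i]=0$, whence $\sum a_ib_i=\sum b_ia_i$. Your Reynolds-projector argument bypasses this computation entirely: since $\ad_x(A)$ lands in the non-invariant part $\ker e$ for every $x$, any invariant $z=\sum a_j\mu(x_j)$ differs from $\sum\mu(x_j)a_j$ by something killed by $e$, and $e$ preserves the $\g$-stable subspace $\mu(\g)A$. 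This is cleaner and uses only complete reducibility of $A$, not the trace-zero property of $\ad$; the paper's computation, by contrast, actually exhibits the stronger identity $\sum a_ib_i=\sum b_ia_i$ (not just modulo $\ker e$) at the cost of choosing an invariant lift.
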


\begin{proof} Let us prove (i). 
Since the functor of taking $\g$-invariants is
exact on completely reducible modules, the only equality we 
need to prove is (using a shorthand notation)
$(A\mu(\g))^\g=(\mu(\g)A)^\g$. 

Since $\g$ is reductive and $A$ is
completely reducible as a $\g$-module, 
the space $(A\mu(\g))^\g$ is the image of
$(A\otimes \g)^\g$ under the multiplication map. Therefore, any
element $x\in (A\mu(\g))^\g$ is the image of some element $x'=
\sum a_i\otimes b_i\in (A\otimes\g)^\g$, where $b_i$ is a basis 
of $\g$. Let $[b_ib_j]=\sum c_{ij}^kb_k$. Then 
$$
\sum [b_ja_i]\otimes b_i+\sum a_i\otimes [b_jb_i]=0,
$$
which implies that $[b_ja_i]=\sum c_{pj}^ia_p$. 
Thus $\sum_i [b_ia_i]=\sum_{p,i} c_{pi}^ia_p=0$, as 
$\sum c_{pi}^i={\rm Tr}({\rm ad}(b_p))=0$ (because $\g$ is reductive). 
So $x=\sum a_ib_i=\sum b_ia_i\in (\mu(\g)A)^\g$, as desired.  

To prove (ii), note that $A/A\mu(\g)$ obviously acts by operators
from $V^\g$ to $V$, so $(A/A\mu(\g))^\g$ acts on $V^\g$. 
Also, it is clear that $A/\mu(\g)A$ acts from $V$ to $V_\g$, so 
$(A/\mu(\g)A)^\g$ acts on $V_\g$. These actions are functorial in $V$,
so they extend to derived functors, as desired.  

Statement (iii) follows easily from the fact that 
$\g$ is reductive and acts completely reducibly on $A$. 
\end{proof} 

\subsection{Symplectic Reflection Algebras}

Let $L$
be a 2-dimensional complex vector space equipped with a symplectic
form $\omega$. Let $\Gamma$ be a finite subgroup of 
$Sp(L)\cong SL_2(\Bbb C)$. Let $\gn = S_n\ltimes \Gamma^n$, where
$S_n$ is the symmetric group.

Let $\C[\Gamma]$ be the group algebra of $\Gamma$, and 
$Z\Gamma$ be the center of $\C[\Gamma]$.
Let $c:\Gamma\setminus\lbrace{1\rbrace}\to \C$ be a conjugation
invariant function. 

For $u \in L$ denote by $u_l \in L^n$ the corresponding element in
the $l$-th summand. Similarly, for $\gamma \in \Gamma$ let $\gamma_l$ be
the element of $\Gamma^n \subset \gn$, in which $\gamma$ stands
in the $l$-th place. Let $s_{lm} \in S_n$ be the transposition of $l$ and $m$.
Let $k,t\in \Bbb C$. 

\begin{definition}
The symplectic reflection algebra 
$H_{t,k,c}=H_{t,k,c}(\gn)$ of rank $n$ associated to
$\Gamma$ is the quotient of the smash product $\C [\gn]\ltimes 
T(L^n)$ (where the group algebra $\C [\gn]$ acts on $T(L^n)$
in the obvious way), by the additional relations
$$
[u_l, v_m] = - \frac{k}{2} \sum_{\gamma \in \Gamma} \omega(\gamma u,v)
s_{lm}\gamma_l\gamma_m^{-1},
\qquad u,v \in L, \ \ l \ne m;
$$
$$
[u_l,v_l] = \omega(u,v)\left(t+\sum_{\gamma \in
\Gamma,\gamma\ne 1} c_\gamma \gamma_l +
\frac{k}{2} \sum_{m \ne l} \sum_{\gamma \in \Gamma} s_{lm}\gamma_l\gamma_m^{-1}
\right).
$$
\end{definition}

Note that for any $a \ne 0$ 
we have  $H_{at,ak,ac} \cong  H_{t,k,c}$, so there are two
essentially different cases: the {\em classical} case $t=0$ and the 
{\em quantum} case $t=1$. In this paper, we focus on the quantum
case $t=1$. 

\subsection{Enveloping algebra quotients} 

Let $\g$ be a reductive Lie algebra. 
Let $\p$ be a parabolic subalgebra in $\g$, and let $\mu : \p \to \C$
be a Lie algebra character. 
Let ${\rm Ker} \mu\subset U(\p)\subset U(\g)$ be the kernel of the
1-dimensional representation $\mu : U(\p) \to \C$, and 
$I({\rm Ker}\mu)$ be the largest two-sided ideal in $U(\g)$ contained in the
left ideal $U(\g)\cdot {\rm Ker} \mu$.
Define the algebra
$$
U_{\mu}^\p (\g) = U(\g) / I({\rm Ker} \mu).
$$

Let $M_\mu^\p={\rm Ind}_\p^\g \mu$ be the generalized Verma
module. It is easy to show that the ideal $I({\rm Ker}\mu)$
coincides with the annihilator 
$\ann(M_\mu^\p)$. Therefore, the algebra $U_{\mu}^\p (\g)$ 
acts on any representation of $\g$
with highest weight $\mu$ with respect to $\p$.

\subsection{The Main Theorem}

Let $D$ be a graph with a shape of a star, that is, a tree with one $m$-valent
vertex $\n$, called the {\em node} or the {\em branching vertex},
and the rest of the vertices 2- and 1-valent (which form $m$ ``legs''
growing from the node). We label the legs of $D$ by numbers
$1,...,m$, and let
$d_i$ be the number of vertices in the $i$-th leg including the node. Suppose that $d_1 \le \dots \le d_m$.
Then let us enumerate
the vertices (excluding $\n$) by pairs $(j,i)$, where $1 \le j
\le m$ is the leg number,
and $1 \le i < d_j$ is the number of our vertex on the leg
starting from the outside (so
$(j,d_j-1)$ is connected with $\n$).

Recall that via the McKay correspondence, $\Gamma$ corresponds 
to a finite ADE Dynkin diagram $D^0_\Gamma$, 
and the irreducible finite-dimensional
representations of $\Gamma$ correspond to vertices of the
affinization $D_\Gamma$ of $D^0_\Gamma$.
Suppose that  $D_\Gamma$ has the shape of a star. It means
that $D_\Gamma^0$ is the Dynkin graph of $D_4$ or $E_6$ or $E_7$
or $E_8$. Then the sets $(d_1,...,d_m)$ in the four cases under
consideration are: (2,2,2,2), (3,3,3), (2,4,4), and (2,3,6). Let
$\ell = d_m$, and note that in our setting all $d_i$ are divisors
of $\ell$. Denote by $\o = (m,1)$ the affinizing vertex, that is,
the vertex belonging to $D_\Gamma$ but not $D_\Gamma^0$.

\bigskip

\psfig{file=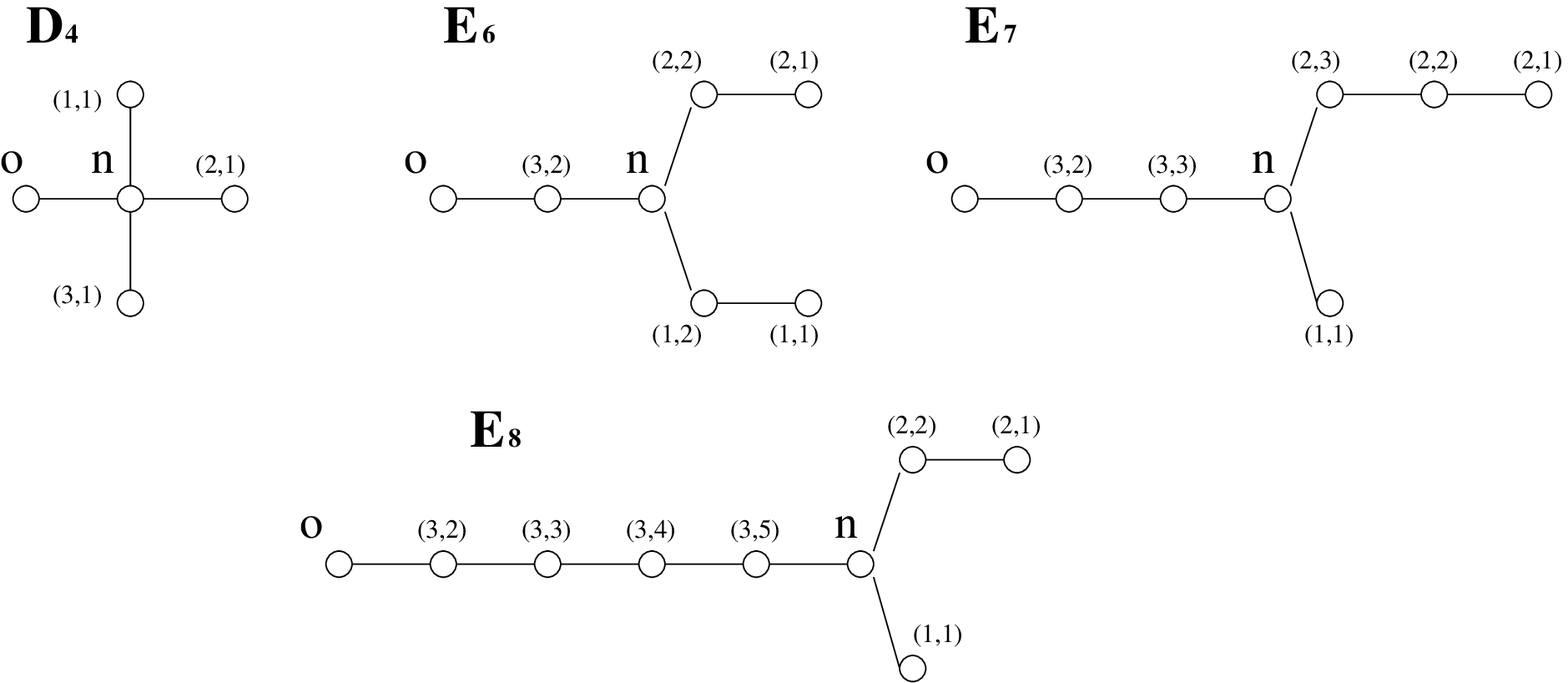,width=12.5cm}

\medskip

For a parabolic subalgebra $\p \subset \g$ and a character $\mu$ of $\p$ set
$$ \iota^\p_\mu : \g \hookrightarrow U(\g) \to U^\p_\mu(\g).$$
Denote by $\iota^{\p_1, \dots, \p_m}_{\mu_1, \dots , \mu_m}$ the natural map
$$
\sum_{i=1}^m 1^{\otimes i-1} \otimes \iota^{\p_i}_{\mu_i}
\otimes 1^{\otimes m-i}: \g\to U_{\mu_1}^{\p_1}(\g) 
\otimes \dots \otimes U_{\mu_m}^{\p_m}(\g).
$$

Let $r>1$ be a positive integer. 
For any positive integer $s$ dividing $r$, 
denote by $\p(s,r) \subset \gs_r$ the
parabolic subalgebra generated by the subalgebra 
$\b$ of upper triangular matrices, and
the elements $f_i=E_{i+1,i}$ for $i$ not divisible by 
$r/s$. Also, denote by $\p'(s,r)\subset \p(s,r)\subset \gs_r$ 
the parabolic subalgebra generated by $\b$ and $f_i$ for
$i>1$ not divisible by $r/s$.

Denote by $P_\Gamma$ the vector space spanned by the roots of the affine
root system corresponding to $\Gamma$. For each
vertex $\i$ of $D_\Gamma$ denote by $\alpha_\i\in P_\Gamma$ 
the corresponding simple root.
For a vertex $\i\in D_\Gamma$, denote by $N_\i$ the corresponding
representation of $\Gamma$.

For a weight $\lambda \in P_\Gamma$ set
$$\mu_j(n,\lambda) = \sum_{i=1}^{d_j-1} \left(\lambda_{(j,i)} - \frac{n\ell}{d_j}\right) \omega_{{n\frac{\ell}{d_j}i}},$$
where $\omega_i$ is the $i$-th fundamental weight of $\gs_{n\ell}$.

Let $c:\Gamma\setminus \lbrace{1\rbrace}$ be a conjugation
invariant function. 
Define the weight $\lambda(c)\in P_\Gamma$ by
$$
\lambda(c) = \sum_{\i} \lambda(c)_\i \alpha_\i,
$$
where 
$$
\lambda(c)_\i = \frac{1}{|\Gamma|}\left(\dim N_\i +
\sum_{\gamma\ne 1}c_\gamma\tr_{N_\i}(\gamma)\right).
$$

\begin{theorem}\label{thmain}
Let
$$\p_i = \p(d_i, n\ell), \quad  \mu_i = \mu_i(n,\lambda(c)) \quad \mbox{for} \ 1\le i <m,$$
$$\p_m = \p'(d_m, n\ell), \quad
\mu_m = \mu_m(n,\lambda(c)) + n\left(\frac{k}{2}-1\right)\omega_1 - \frac{k}{2}\omega_n.$$
Then we have
$$eH_{1,k,c}e \cong \qr 
\left(U_{\mu_1}^{\p_1}(\gs_{n\ell}) 
\otimes \dots \otimes U_{\mu_m}^{\p_m}(\gs_{n\ell}), \gs_{n\ell},
\iota^{\p_1, \dots,  \p_m}_{\mu_1, \dots, \mu_m} \right).$$
\end{theorem}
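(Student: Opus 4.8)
The plan is to deduce Theorem~\ref{thmain} from the quiver realization of the spherical subalgebra in \cite{EGGO} by carrying out the relevant quantum Hamiltonian reduction in two stages, using Proposition~\ref{seqred}(iii). First I would invoke \cite{EGGO}, which expresses
$$eH_{1,k,c}e \cong \qr\left(\D(\rep(Q,\mathbf v)),\ \g_Q,\ \mu_\chi\right),$$
where $Q$ is the Calogero--Moser quiver obtained from the star-shaped affine diagram $D_\Gamma$ by adjoining a one-dimensional vertex linked to the affinizing vertex $\o$, the dimension vector $\mathbf v$ is $n$ times the null root (so that the branching vertex $\n$ carries $\C^{n\ell}$ and the vertex $(j,i)$ carries dimension $ni\ell/d_j$), $\g_Q=\bigoplus_v\gl_{\mathbf v_v}$ is the Lie algebra of the gauge group, and $\mu_\chi$ is the moment map shifted by a central character $\chi$ built from $c$ and $k$.

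Next I would split the gauge algebra as $\g_Q=\g_{\text{leg}}\oplus\g_\n$, where $\g_{\text{leg}}=\bigoplus_{(j,i)}\gl_{ni\ell/d_j}$ gathers the non-central vertices and $\g_\n=\gl_{n\ell}$ is the branching vertex (whose scalar part is absorbed, so that $\gs_{n\ell}$ is what effectively acts). Because the arrows of $Q$ join only consecutive vertices within one leg, and each leg only to $\n$, the representation space factors over the legs and $\g_{\text{leg}}$ acts leg-wise; hence $\D(\rep(Q,\mathbf v))$ is a tensor product of the leg algebras. Proposition~\ref{seqred}(iii) then lets me reduce first by $\g_{\text{leg}}$, leg by leg, and afterwards by $\g_\n=\gs_{n\ell}$.

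The heart of the argument is the first reduction, where I would use localization. For a single leg $j$, the stable locus of its representation space---chains of injective maps cutting out a flag in $\C^{n\ell}$ with successive dimensions $ni\ell/d_j$---modulo $\prod_i GL_{ni\ell/d_j}$ is the partial flag variety $\fl_j=GL_{n\ell}/P_j$, with $P_j$ the parabolic of block sizes $n\ell/d_j$. Beilinson--Bernstein localization then identifies the Hamiltonian reduction of $\D(\rep_j)$ with global twisted differential operators on $\fl_j$, and hence with the enveloping-algebra quotient
$$\qr\big(\D(\rep_j),\ \g_{\text{leg},j},\ \mu\big)\ \cong\ U^{\p_j}_{\mu_j}(\gs_{n\ell}),$$
the quotient being by $\ann(M^{\p_j}_{\mu_j})=I(\Ker\mu_j)$ as in the definition of $U^\p_\mu(\g)$ above. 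The residual action at $\n$ makes each factor a $\gs_{n\ell}$-module algebra through $\iota^{\p_j}_{\mu_j}$, so after the first stage I obtain $\bigotimes_j U^{\p_j}_{\mu_j}(\gs_{n\ell})$ with the map $\iota^{\p_1,\dots,\p_m}_{\mu_1,\dots,\mu_m}$. For the last leg the adjoined auxiliary vertex breaks one parabolic generator and shifts the twist, which is precisely what produces $\p_m=\p'(d_m,n\ell)$ and the correction $n(k/2-1)\omega_1-(k/2)\omega_n$ in $\mu_m$.

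Finally I would check, leg by leg, that the quiver central character $\chi$ translates into the weights $\mu_i(n,\lambda(c))$ through the displayed formula for $\mu_j(n,\lambda)$ and the definition of $\lambda(c)_\i$ in terms of $\dim N_\i$ and $\tr_{N_\i}(\gamma)$; then the second reduction $\qr(-,\gs_{n\ell},\overline{\mu_\n})$ is literally the right-hand side of Theorem~\ref{thmain}, which by Proposition~\ref{seqred}(iii) equals the total reduction, namely $eH_{1,k,c}e$. The main obstacle I expect lies in the localization step: verifying that the reduction yields exactly the two-sided ideal $I(\Ker\mu_j)$---not merely differential operators on an open locus---and exactly the parabolics $\p_j,\p'_m$, together with the character matching, where the $\gl$-versus-$\gs$ normalizations, the trace conventions, and the shift at the branching vertex must all be tracked carefully.
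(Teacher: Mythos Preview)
Your proposal is correct and follows essentially the same route as the paper: invoke the quiver realization from \cite{EGGO} (Theorem~\ref{DAHA-Q}), split the reduction via Proposition~\ref{seqred}(iii) into a leg-by-leg stage---yielding twisted differential operators on partial flag varieties, hence $U^{\p_j}_{\mu_j}$ by parabolic localization---followed by the final $\gs_{n\ell}$-reduction. The obstacle you anticipate is exactly where the paper's effort goes: its Theorem~\ref{Q-Gr} proves that reducing $\D(\bigoplus_i\hom(V_i,V_{i+1}))$ by $\bigoplus_i\gl_{r_i}$ gives precisely $\D_\chi(\fl)$ (not something larger), via a multiplicity count using flatness of the quiver moment map \cite{CB} and normality of nilpotent orbit closures \cite{KP} through Proposition~\ref{Gr-U}.
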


This theorem is a quantum analog of the result of Section 2.6 in
\cite{EGO}. We prove it in the next section.

\begin{corollary}\label{consrep}
Let $Y$ be a representation of $\otimes_{i=1}^m 
U_{\mu_i}^{\p_i}(\gs_{n\ell})$ (for example, 
$Y=U_1\otimes \dots \otimes  U_m$, where 
$U_i$ are representations of $U_{\mu_i}^{\p_i}(\gs_{n\ell})$, 
$i=1,...,m$). Then $eH_{1,k,c}e$ acts on
$Y^{\gs_{n\ell}}$ 
and $Y_{\gs_{n\ell}}$.
More generally, it acts on the cohomology $H^i(\gs_{n\ell},Y)$ 
and the homology 
$H_i(\gs_{n\ell},Y)$.
\end{corollary}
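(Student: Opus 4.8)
The plan is to derive Corollary~\ref{consrep} directly from the main theorem together with part (ii) of Proposition~\ref{seqred}, so that essentially no new computation is required. The key observation is that once Theorem~\ref{thmain} identifies the spherical subalgebra $eH_{1,k,c}e$ with the quantum Hamiltonian reduction
$$\qr\left(U_{\mu_1}^{\p_1}(\gs_{n\ell})\otimes\dots\otimes U_{\mu_m}^{\p_m}(\gs_{n\ell}),\ \gs_{n\ell},\ \iota^{\p_1,\dots,\p_m}_{\mu_1,\dots,\mu_m}\right),$$
any construction of representations of such a reduction automatically produces representations of $eH_{1,k,c}e$. So the task reduces to applying the general representation-theoretic machinery already established for $\qr(A,\g,\mu)$.

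First I would set $A=U_{\mu_1}^{\p_1}(\gs_{n\ell})\otimes\dots\otimes U_{\mu_m}^{\p_m}(\gs_{n\ell})$, $\g=\gs_{n\ell}$, and $\mu=\iota^{\p_1,\dots,\p_m}_{\mu_1,\dots,\mu_m}$, so that by Theorem~\ref{thmain} we have $eH_{1,k,c}e\cong\qr(A,\g,\mu)$. A representation $Y$ of $\otimes_{i=1}^m U_{\mu_i}^{\p_i}(\gs_{n\ell})$ is precisely an $A$-module. To invoke Proposition~\ref{seqred}(ii) I must first verify its hypotheses in this setting: that $\g=\gs_{n\ell}$ is reductive, and that the adjoint action of $\g$ on $A$ is completely reducible. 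The former is immediate since $\gs_{n\ell}$ is semisimple. For the latter, I would note that $\gs_{n\ell}$ acts on each tensor factor $U_{\mu_i}^{\p_i}(\gs_{n\ell})$ by the adjoint action, which is locally finite (each element lies in a finite-dimensional $\ad$-stable subspace, as $U(\g)$ is a union of the finite-dimensional $\ad$-invariant filtration pieces), and a locally finite representation of a semisimple Lie algebra is completely reducible; the tensor product of completely reducible locally finite modules is again completely reducible.

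Having checked the hypotheses, Proposition~\ref{seqred}(ii) applies verbatim: it states that for any $A$-module $V$, the reduction $\qr(A,\g,\mu)$ acts naturally on $H^i(\g,V)$ and $H_i(\g,V)$, and in particular on the invariants $V^\g$ and coinvariants $V_\g$. Taking $V=Y$ and transporting the $\qr(A,\g,\mu)$-action to an $eH_{1,k,c}e$-action via the isomorphism of Theorem~\ref{thmain}, we conclude that $eH_{1,k,c}e$ acts on $Y^{\gs_{n\ell}}$, on $Y_{\gs_{n\ell}}$, and more generally on $H^i(\gs_{n\ell},Y)$ and $H_i(\gs_{n\ell},Y)$. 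The parenthetical special case $Y=U_1\otimes\dots\otimes U_m$ is simply the observation that an (outer) tensor product of modules $U_i$ over the individual factors $U_{\mu_i}^{\p_i}(\gs_{n\ell})$ is automatically a module over the tensor product algebra $A$.

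The only genuine point requiring care — and the step I expect to be the main (if modest) obstacle — is the verification that the adjoint action of $\gs_{n\ell}$ on $A$ is completely reducible, since Proposition~\ref{seqred} is stated under that hypothesis rather than proved to hold automatically. This is where one must use that the $\ad$-action on each enveloping-algebra quotient $U_{\mu_i}^{\p_i}(\gs_{n\ell})$ is locally finite, so that complete reducibility follows from semisimplicity of $\gs_{n\ell}$. Everything else is a formal consequence of Theorem~\ref{thmain} and the already-proven Proposition~\ref{seqred}(ii).
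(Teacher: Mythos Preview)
Your proposal is correct and follows exactly the paper's approach: the paper's proof consists of the single sentence ``The corollary follows immediately from Theorem~\ref{thmain} and Proposition~\ref{seqred}.'' You have simply spelled out the hypothesis-checking (reductivity of $\gs_{n\ell}$ and complete reducibility of the adjoint action on $A$) that the paper leaves implicit.
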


\begin{proof}
The corollary follows immediately 
from Theorem~\ref{thmain} and Proposition \ref{seqred}. 
\end{proof}

\section{Proof of Theorem~\ref{thmain}}

\subsection{Twisted differential operators}

One of the important applications of the quantum Hamiltonian
reduction construction is the construction of the sheaf of differential
operators on an algebraic variety, twisted by a collection of line bundles.  

Namely, let $X$ be a smooth affine complex algebraic variety, 
$\mathcal{D}(X)$ be the algebra of differential operators on $X$, 
and let $L_1, \dots L_d$ be line bundles on $X$. Let $E$
denote the total space of the principal $(\Bbb C^\times)^d$-bundle 
corresponding to $L_1 \oplus \dots \oplus L_d$. Denote by 
$\E_i$ the Euler vector field on $E$ along the the $i$-th factor 
of the fiber, and by $\E$ the map $\C^d\to {\rm Vect}(E)$ sending
the standard basis vectors to $\E_i$. 

Let $\chi \in \left(\C^d\right)^*$.
 
\begin{definition}\label{dftd}
The algebra of twisted differential operators $D_{\chi,L_1,...,L_d}(X)$ is
defined by 
$$
\D_{\chi,L_1,...,L_d}(X):=\qr (\D(E), \C^d, \E-\chi),
$$
\end{definition}

This is a flat $d$-parametric deformation of the algebra 
$\D(X) \cong \D_{0,L_1,...,L_d}(X)$.

\begin{remark}
Let $\nabla_i$ be connections on $L_i$ with curvatures $F_i$. 
Using these connections, we can naturally lift any vector 
field $v$ on $X$ to a $(\Bbb C^\times)^d$-invariant vector 
field $\nabla(v)$ on $E$, and we have 
\begin{equation}\label{curv}
[\nabla(v),\nabla(w)]=\nabla([v,w])+\sum_{i=1}^d \chi_i F_i(v,w)
\end{equation}
modulo the ideal generated by $\E_i-\chi_i$. Thus the algebra
$\D_{\chi,L_1,...,L_d}(X)$ is generated by regular
functions $f\in {\mathcal O}(X)$ and elements $\nabla(v)$, $v\in {\rm
Vect}(X)$, with relations 
$$
\nabla(fv)=f\nabla(v), [\nabla(v),f]=L_vf, 
$$
and (\ref{curv}), where $L_v$ is the Lie derivative. 
This is the usual definition of twisted differential operators.
\end{remark} 

If $X$ is smooth but not affine, then for any affine open set
$U\subset X$, we have the algebra
$$
\D_{\chi,L_1,...,L_d}(U) = \qr (\D(E(U)), \C^d, \E-\chi),
$$
where $E(U)$ is the preimage of $U$ under the map $E\to X$. 
One can show that this defines a quasicoherent sheaf 
$D_{\chi,L_1,...,L_d}$ on $X$. This sheaf is called {\it the sheaf of
twisted differential operators}. 

\begin{remark}
For more on twisted differential operators, see \cite{BB}.
\end{remark} 

\subsection{Twisted differential
operators on partial flag varieties}

Let $G$ be an algebraic group corresponding to
$\g$, and $P$ the parabolic subgroup of $G$ corresponding to
$\p$. Then to any Lie algebra character $\mu:\p\to\C$ we 
can canonically attach the algebra of global twisted 
differential operators $\D_\mu(G/P)$, where the twisting is with respect to 
the line bundles $L_i$ corresponding to some generators $\chi_i$
of the character group ${\rm Hom}(P,\C^\times)$. Namely, 
$$
\D_\mu(G/P)=\D_{\mu^1,...,\mu^s,L_1,...,L_s}(G/P),
$$
where $\mu=\sum \mu^i d\chi_i$, and $s=\dim P/[P,P]$
(it is easy to check that this construction is independent
on the choice of generators). 

It follows from the definition of twisted differential operators 
that the natural homomorphism $U(\g)\to \D_\mu(G/P)$ 
defines a homomorphism \linebreak $i_\mu: U_{\mu}^\p (\g)\to \D_\mu(G/P)$.

In general, the map $i_\mu$ is not always an isomorphism.
However, we have the following well known result. 
Recall that the algebras $U^\p_\mu (\g)$ and $\D_\mu(G/P)$ 
have natural filtrations (the first one is induced from the
filtration on $U(\g)$, and the second one is by order of differential
operators).

\begin{prop}\label{Gr-U} If $G=SL_r$ then 

(i) ${\rm gr}(\D_\mu(G/P))=\C[T^*(G/P)]$, and 
$i_\mu$ is an isomorphism of filtered algebras
for any $\mu$;

(ii) for every irreducible finite dimensional 
representation $Y$ of $G$, the multiplicity of $Y$ 
in $\D_\mu(G/P)$ equals the dimension of $Y^L$, where $L$ is the
Levi subgroup of $P$.  
\end{prop}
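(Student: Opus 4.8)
The plan is to prove both statements of Proposition~\ref{Gr-U} by passing to the associated graded level and exploiting the rigidity of $SL_r$. For part (i), the key point is that the symbol map identifies the associated graded of $\D_\mu(G/P)$ with a subalgebra of $\C[T^*(G/P)]$, and one must show this inclusion is in fact an equality and that it is independent of $\mu$. The natural filtration on $\D_\mu(G/P)$ by order of twisted differential operators has symbols that are functions on the cotangent bundle, because the twisting by line bundles affects only the lower-order terms of the commutation relations (concretely, in equation~(\ref{curv}) the curvature correction $\sum_i \chi_i F_i(v,w)$ lowers the order); hence $\gr \D_\mu(G/P)$ is canonically identified, as a graded Poisson algebra, with $\gr \D(G/P) = \C[T^*(G/P)]$ regardless of $\mu$. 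First I would establish this identification carefully, using that $G/P$ is a smooth projective variety so that $\D(G/P)$ has a clean cotangent-bundle symbol calculus.

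With $\gr \D_\mu(G/P) = \C[T^*(G/P)]$ in hand, I would prove that $i_\mu$ is a filtered isomorphism by a dimension-counting / associated-graded argument. The map $\gr(i_\mu): \gr U^\p_\mu(\g) \to \C[T^*(G/P)]$ is a morphism of graded algebras; since $\gr U^\p_\mu(\g)$ is a quotient of $\gr U(\g) = \C[\g^*] = S(\g)$ and $T^*(G/P)$ maps to $\g^*$ via the moment map with image the closure of a nilpotent orbit (or its union with a semisimple shift), I would identify $\gr U^\p_\mu(\g)$ with the coordinate ring of the corresponding coadjoint orbit closure. The crucial input here is that for $G = SL_r$ the moment map $T^*(G/P) \to \overline{\mathcal{O}}$ is a resolution of singularities onto a normal variety whose coordinate ring has no higher cohomology—so that $\C[T^*(G/P)] = \C[\overline{\mathcal{O}}]$ and the latter matches $\gr U^\p_\mu(\g)$. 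Normality of these orbit closures (and the vanishing $H^{>0}(G/P, S(T_{G/P})) = 0$) is special to type $A$, which is exactly why the hypothesis $G = SL_r$ is needed. Establishing that $\gr(i_\mu)$ is an isomorphism then forces $i_\mu$ itself to be a filtered isomorphism.

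For part (ii), I would compute the multiplicity of an irreducible representation $Y$ in $\D_\mu(G/P)$ using the geometric description of $\D_\mu(G/P)$ as global sections of a sheaf of twisted differential operators, together with the $G$-equivariant structure. Since $G$ is reductive and acts on $\D_\mu(G/P)$, the multiplicity of $Y$ equals $\dim \hom_G(Y, \D_\mu(G/P))$. Using $\gr \D_\mu(G/P) = \C[T^*(G/P)] = \C[G \times_P \p^\perp]$ and the induction/Frobenius reciprocity $\hom_G(Y, \C[G\times_P V]) = \hom_P(Y, \C[V])$, the multiplicity is governed by the $P$-invariants, which by reductivity of the Levi and the triangular structure reduce to the dimension of the $L$-invariants $Y^L$. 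Concretely I would write $\C[T^*(G/P)]$ as functions induced from the nilradical dual, take $G$-isotypic components via the Borel--Weil--Bott / Frobenius reciprocity isomorphism $\hom_G(Y, \operatorname{Ind}_P^G \C[\p^\perp]) \cong \hom_P(Y, \C[\p^\perp])$, and observe that the associated graded multiplicity equals $\dim Y^L$; finiteness and flatness of the filtration then give the same multiplicity for $\D_\mu(G/P)$ itself.

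The main obstacle I expect is part (i): namely verifying that $\gr(i_\mu)$ is surjective, equivalently that every function on $T^*(G/P)$ is the symbol of a twisted differential operator coming from $U(\g)$. This hinges on the normality of the nilpotent orbit closures in $\gs_r$ and the vanishing of higher cohomology of the structure sheaf of $T^*(G/P)$, which are precisely the facts special to type $A$ that the statement of the proposition singles out by assuming $G = SL_r$. Once surjectivity at the graded level is secured, injectivity is automatic because $i_\mu$ is, by construction, induced from the defining surjection $U(\g) \to \D_\mu(G/P)$ whose kernel is exactly $I(\operatorname{Ker}\mu) = \ann(M^\p_\mu)$, so that $U^\p_\mu(\g) \to \D_\mu(G/P)$ has no kernel beyond what the filtration already detects.
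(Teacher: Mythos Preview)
Your approach to part (i) uses the same essential ingredients as the paper---normality of the nilpotent orbit closure $O_\p$ (Kraft--Procesi), the fact that the moment map $T^*(G/P)\to O_\p$ is a resolution of singularities, and the Beilinson--Bernstein machinery identifying $\gr U^\p_\mu(\g)$ with $\C[O_\p]$.  The paper packages this by verifying condition (iii) of \cite{BB}, Theorem~5.6, and then reading off both conclusions simultaneously.  One point of caution: you assert early on that $\gr\D_\mu(G/P)=\gr\D(G/P)=\C[T^*(G/P)]$ ``regardless of $\mu$'' because twisting only affects lower-order terms.  At the sheaf level this is true, but for global sections on the projective variety $G/P$ the equality $\gr\Gamma=\Gamma\gr$ requires the cohomology vanishing $H^{>0}(G/P,S^kT_{G/P})=0$, which is itself part of what needs to be proved.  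The paper avoids this circularity by only using the a priori inclusion $\gr\D_\mu(G/P)\subset\C[T^*(G/P)]$ and deducing equality \emph{after} establishing that $\gr(i_\mu)=\xi_0^*$ is an isomorphism.

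For part (ii) you take a genuinely different route.  The paper argues that $\C[O_\p]\cong\C[G/L]$ as $G$-modules by exhibiting a flat one-parameter family of semisimple orbits $O_\p^t\cong G/L$ degenerating to $O_\p$; the multiplicity formula is then immediate from Peter--Weyl.  Your Frobenius reciprocity approach correctly reduces the multiplicity of $Y$ in $\C[T^*(G/P)]=\C[G\times_P\mathfrak{u}]$ to $\dim\hom_P(Y,\C[\mathfrak{u}])$, but the step ``by reductivity of the Levi and the triangular structure reduce to $\dim Y^L$'' is a genuine gap: the unipotent radical $N$ acts nontrivially on $\C[\mathfrak{u}]$ via the adjoint action (whenever $\mathfrak{u}$ is nonabelian), so one cannot simply pass to $N$-coinvariants of $Y$ and then take $L$-invariants.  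Computing $(Y^*\otimes\C[\mathfrak{u}])^P$ directly is not straightforward, and the cleanest way to get the answer is precisely the deformation-to-$G/L$ argument the paper uses.  Your approach could likely be completed, but it would require substantially more work than the paper's two-line degeneration argument.
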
 

This is a parabolic version of the localization theorem. 
For convenience of the reader, let us recall the proof of this
result. 

\begin{proof}
(i) Let $O_\p$ the closure of the adjoint orbit  
$\Ad(SL_r)n_\p$, where $n_\p$ is a generic
element of $\rad\ \p$. According to \cite{KP}, 
$O_\p$ is a normal variety. Moreover, we have a natural Springer map 
$\xi_0: T^*(G/P)\to O_\p$ (the moment map for the
group action), which is a resolution of singularities in this 
case (see \cite{BB}, 2.7). Indeed, each point of 
$T^*(G/P)$ can be regarded as a pair $(\p',n)$,
where $\p'$ is a parabolic subalgebra conjugate to $\p$, and
$n\in\rad\ \p'$, and the moment map sends $(\p',n)$ to $n\in
O_\p$. Thus, we just need to show that for generic $n\in O_\p$,
there is a unique $\p'$ conjugate to $\p$ 
such that $n\in\rad\ \p'$; this follows from elementary linear algebra.

Thus, the natural map $\xi^*_0: \Bbb C[O_\p]\to \Bbb
C[T^*(G/P)]$ induced by the moment map $\xi_0$ is an
isomorphism of graded algebras. Hence we can apply Theorem 5.6
from \cite{BB} (as one of the equivalent conditions stated in 
this theorem, namely, condition (iii), holds). 

 By Theorem 5.6(v) of \cite{BB}, 
the associated graded for $U^\p_\mu(\g)$ is the algebra
$\C[O_\p]$ of polynomial functions on $O_\p$. 
At the same time, the associated graded algebra of 
$\D_\mu(G/P)$ is contained in the algebra $\C[T^*(G/P)]$ of polynomial
functions on the cotangent bundle.
Also, it is clear that ${\rm gr}(i_\mu)=\xi_0^*$. 
Thus $i_\mu$ is an isomorphism of filtered algebras, 
and ${\rm gr}(\D_\mu(G/P))=\C[T^*(G/P)]$, as desired.

(ii) It suffices to show that $\Bbb C[O_\p]$ has the stated
property. There exists a 1-parameter family of semisimple orbits
$O_\p^t, t\in \Bbb C^\times$, which degenerates 
at $t=0$ into $O_\p$. These orbits have the form $G/L$. 
Thus, as a $G$-module, $\Bbb C[O_\p]=\Bbb C[G/L]$,
and the statement follows from the Peter-Weyl theorem.   
\end{proof} 

\subsection{Relation to Quivers}

Let $Q$ be a quiver with the set of vertices $I$. For any edge $a\in
Q$ denote by $h(a)$ and $t(a)$ its head and tail, respectively.

For a dimension vector $\beta \in \ZZ_+^I$
consider the space of representations
$$\rep_\beta(Q) = \bigoplus_{a \in Q} \hom
\left(\C^{\beta_{t(a)}}, 
\C^{\beta_{h(a)}} \right).$$

This space admits a natural linear action $\itw$ of the group 
$GL(\beta) = \prod_{\i \in I} GL_{\beta_\i}$.
Denote by $\gl(\beta)$ the Lie algebra of this group;
then we have the corresponding map
$d\itw: \gl(\beta) \to \D(\rep_{\beta}(Q))$.

Now let us introduce quiver-related 
twisted differential operators. For $\chi \in \C^I$ let
$$\chi \tr = \sum_{\i \in I} \chi_\i \cdot \tr_\i: \gl(\beta) \to \C,$$
where $\tr_\i$ is the trace on $\gl_{\beta_\i}$.

\begin{definition} Let
$$ \D_\chi (Q,\beta) = \qr\left(\D\left(\rep_{\beta}(Q)\right),
\gl(\beta), d\itw - \chi\tr \right).
$$
\end{definition}

Now let $\Gamma$ be of type $D_4$, $E_6$, $E_7$, $E_8$. 
Let $D_\Gamma^{CM}$ be the graph 
obtained from the graph $D_\Gamma$ by
by adding a vertex $\s$ with one edge from $\s$
to the affinizing vertex $\o$.
Let $Q_{CM}$ be a quiver obtained by 
orienting all edges of $D_\Gamma^{CM}$ in some way, so that the 
additional edge is oriented from $\s$ to $\o$ 
(the Calogero-Moser quiver, see \cite{EGGO}). 

Let $I_{CM}=I\cup \lbrace{\s\rbrace}$ be the set of vertices of
the Calogero-Moser quiver. 
Introduce the vector  $\partial\in \C^{I}$ by
$$\partial_\i = n \left(- \delta_\i + \sum_{a \in Q_{CM}: 
t(a) = \i} \delta_{h(a)}\right),$$
where $\delta:=\sum \dim N_\i\alpha_\i$ is the basic imaginary
root. 

\begin{prop}
For the orientation of all edges towards the node 
we have $\partial_{(i,j)} = n\ell/d_i$.
\end{prop}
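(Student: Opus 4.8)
The plan is to reduce the statement to the combinatorics of the imaginary root $\delta$. Recall that $\delta_\i=\dim N_\i$ is exactly the $\i$-th mark of the affine diagram $D_\Gamma$, and that the vector $(\delta_\i)$ is the unique positive null vector of the affine Cartan matrix (this is the content of $\delta$ being the basic imaginary root, and matches the McKay correspondence, under which the affinizing vertex $\o$ corresponds to the trivial representation, so $\delta_\o=\dim N_\o=1$). In the simply-laced cases $D_4,E_6,E_7,E_8$ all off-diagonal Cartan entries are $0$ or $-1$, so this null-vector property reads, at every vertex $\i$,
\[
2\delta_\i=\sum_{\i'\,\sim\,\i}\delta_{\i'},
\]
the sum being over the neighbors $\i'$ of $\i$ in $D_\Gamma$. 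Everything will follow by applying this single relation along the legs.

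First I would pin down the marks along a fixed leg $i$, whose vertices from the outside are $(i,1),\dots,(i,d_i-1)$, with the node written as $(i,d_i)=\n$. The outer vertex $(i,1)$ is $1$-valent, so the relation gives $2\delta_{(i,1)}=\delta_{(i,2)}$, i.e.\ $\delta_{(i,2)}-\delta_{(i,1)}=\delta_{(i,1)}$; at each interior leg vertex $(i,j)$ the relation gives $\delta_{(i,j+1)}-\delta_{(i,j)}=\delta_{(i,j)}-\delta_{(i,j-1)}$, and the relation at the node-adjacent vertex $(i,d_i-1)$ ties $\delta_\n$ into the same pattern. Hence the marks form an arithmetic progression along the leg with common difference equal to the first term, so that $\delta_{(i,j)}=j\,\delta_{(i,1)}$ for $1\le j\le d_i$; in particular $\delta_\n=d_i\,\delta_{(i,1)}$. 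Applying this to the longest leg $m$, whose outer vertex is $\o$ with $\delta_\o=1$ and whose length is $d_m=\ell$, yields $\delta_\n=\ell$. Comparing with $\delta_\n=d_i\,\delta_{(i,1)}$ for an arbitrary leg then forces $\delta_{(i,1)}=\ell/d_i$, so the common difference on leg $i$ equals $\ell/d_i$. (This reproduces the known tables of marks in the four cases, but uniformly.)

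Finally I would read off $\partial_{(i,j)}$ from its definition for the orientation in which every edge points toward the node. With this orientation the unique edge of $D_\Gamma$ issuing from $(i,j)$ (i.e.\ with $t(a)=(i,j)$) has head $(i,j+1)$, the next vertex toward $\n$; the auxiliary edge joining $\o$ to $\s$ is oriented from $\s$ to $\o$, so it has tail $\s$ and contributes nothing to $\partial_\o$, and the vertex $\s$ is thus irrelevant for all leg vertices. Therefore
\[
\partial_{(i,j)}=n\bigl(-\delta_{(i,j)}+\delta_{(i,j+1)}\bigr)=n\,(\delta_{(i,j+1)}-\delta_{(i,j)})=\frac{n\ell}{d_i},
\]
as claimed. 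The only genuinely substantive step is establishing the linearity of the marks along each leg together with the normalization giving $\delta_\n=\ell$; once these are in hand the remainder is a one-line bookkeeping of the chosen orientation.
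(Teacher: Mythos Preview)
Your argument is correct. The paper's own proof is the single line ``the proof is by a direct computation'', which in context means reading off the well-known marks $\delta_\i$ of the four affine diagrams $\widetilde D_4,\widetilde E_6,\widetilde E_7,\widetilde E_8$ from a table and checking the identity vertex by vertex. Your route is genuinely different in that it is uniform: you derive the marks along each leg from the null-vector relation $2\delta_\i=\sum_{\i'\sim\i}\delta_{\i'}$, obtaining $\delta_{(i,j)}=j\ell/d_i$ and $\delta_\n=\ell$ without appealing to the case-by-case tables. This buys you a cleaner explanation of \emph{why} the answer is $n\ell/d_i$ (the marks are linear along each leg with slope $\ell/d_i$), at the cost of a few more lines. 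The bookkeeping with the orientation and the auxiliary edge $\s\to\o$ is handled correctly; in particular, since that edge has tail $\s\notin I$ it never contributes to any $\partial_\i$.
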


\begin{proof} The proof is by a direct computation. 
\end{proof} 

Define  $\chi^{CM}\in \C^{I_{CM}}$ by
$$\chi^{CM}_\s = n\left(\frac{k}{2} -1\right), \quad
\chi^{CM}_\o = \lambda(c)_\o - \partial_\o -  \frac{k}{2},
\quad \chi^{CM}_\i = \lambda(c)_\i - \partial_\i,\ \i \ne \o,\s.$$

\begin{theorem}\cite{EGGO}\label{DAHA-Q}
Take $\alpha^{CM} = \alpha_\s + n \delta$. 
Then for any orientation of the quiver 
we have an isomorphism of filtered algebras
$$ \D_{\chi^{CM}} (Q_{CM},\alpha^{CM})\cong eH_{1,k,c}e.$$
\end{theorem}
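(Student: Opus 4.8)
The plan is to recognize both algebras as filtered quantizations of the coordinate ring of one and the same wreath-product Calogero--Moser space $\mathcal M$ attached to $\Gamma$ and $n$, and to deduce the isomorphism from a comparison of associated graded algebras, following the strategy of \cite{EGGO}. Here $\mathcal M$ is the classical (GIT) Hamiltonian reduction of $T^*\rep_{\alpha^{CM}}(Q_{CM})$ by $GL(\alpha^{CM})$ at the moment level prescribed by the classical limit of $\chi^{CM}$, and its identification with the deformation of $\text{Hilb}^n$ of the Kleinian resolution is recorded in \cite{EGO}, Section~2.6. The overall scheme I would follow is: (a) identify $\gr$ of each side with $\C[\mathcal M]$; (b) produce a filtered homomorphism between the two algebras whose associated graded is the classical identification of step~(a); (c) invoke the elementary fact that a filtered map inducing an isomorphism on $\gr$ is itself an isomorphism.

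For step~(a) on the quiver side, I would note that $\D(\rep_{\alpha^{CM}}(Q_{CM}))$ carries the order filtration with $\gr = \C[T^*\rep_{\alpha^{CM}}(Q_{CM})]$, and that $d\itw - \chi^{CM}\tr$ is a quantum moment map for the reductive group $GL(\alpha^{CM})$ whose symbol is the classical moment map. Since $GL(\alpha^{CM})$ acts completely reducibly, taking invariants is exact, and, provided the classical moment map is flat with reduced zero fiber, $\gr$ commutes with the reduction $\qr(-,\gl(\alpha^{CM}),-)$, so that $\gr \D_{\chi^{CM}}(Q_{CM},\alpha^{CM}) = \C[\mathcal M]$. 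The required flatness and the identification of the GIT quotient with $\mathcal M$ are the quiver-variety inputs of Crawley--Boevey (see \cite{CBH}) together with \cite{EGO}, Section~2.6. On the symplectic-reflection side, the theorem of \cite{EG} gives $\gr(eH_{1,k,c}e) = \C[\mathcal M]$, the spherical subalgebra being a filtered quantization of $\mathcal M$.

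For step~(b) I would construct the comparison map by a radial-part / Dunkl-operator realization. Restricting to the open locus of $\rep_{\alpha^{CM}}(Q_{CM})$ where the $GL(\alpha^{CM})$-action is locally free, the reduction $\D_{\chi^{CM}}(Q_{CM},\alpha^{CM})$ is an algebra of twisted differential operators on the smooth quotient; matching this with the Dunkl--Opdam embedding of $H_{1,k,c}$ into differential operators on the regular locus smashed with $\gn$ identifies $eH_{1,k,c}e$ with the descended invariant operators. The precise shifts in $\chi^{CM}$ are exactly calibrated for this: the term $-\partial$ is the modular shift by which $GL(\alpha^{CM})$ acts on the canonical bundle of $\rep_{\alpha^{CM}}(Q_{CM})$ (the $\rho$-type discrepancy between operators and half-densities), and the $k/2$-corrections at $\s$ and $\o$ reproduce, through the quantum moment-map value, the defining commutation relations of $H_{1,k,c}$. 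Orientation-independence is then automatic: reversing an edge is implemented by a partial Fourier transform on the Weyl algebra $\D(\rep_{\alpha^{CM}}(Q_{CM}))$, which intertwines the two quantum moment maps and hence induces an isomorphism of the reductions.

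The main obstacle will be step~(b): the explicit radial-part computation, and in particular the bookkeeping showing that the modular shift $\partial$ and the $k/2$-terms conspire to turn the quantum moment map precisely into the symplectic-reflection relations, together with the verification that the associated graded of the resulting filtered homomorphism is the classical identification of step~(a). A secondary difficulty, underlying step~(a), is establishing flatness of the classical moment map and reducedness of its zero fiber for the dimension vector $\alpha^{CM} = \alpha_\s + n\delta$, so that $\gr$ genuinely commutes with the Hamiltonian reduction; this is exactly where the quiver-variety results for the star-shaped affine type enter. Once these are in place, the filtered-isomorphism criterion of step~(c) completes the proof.
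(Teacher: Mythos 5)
This theorem is not proved in the paper at all: it is imported by citation from \cite{EGGO}, so the only meaningful comparison is with the proof given there. Your outline --- a filtered (deformed Harish--Chandra/radial-part) homomorphism built from Dunkl-type operators on the locally free locus, flatness of the classical quiver moment map for $\alpha^{CM}=\alpha_\s+n\delta$ (Crawley--Boevey), the identification $\gr(eH_{1,k,c}e)\cong\C[\mathcal M]$ from \cite{EG} together with the classical reduction picture of \cite{EGO}, and the standard ``isomorphism on $\gr$ implies isomorphism'' step, with orientation-independence via partial Fourier transform absorbed into the orientation-dependent shift $\partial$ in $\chi^{CM}$ --- is essentially the strategy of the cited proof, with the one caveat that the explicit radial-part computation you defer as ``the main obstacle'' is precisely the substantive content of \cite{EGGO} rather than a routine verification.
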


\subsection{Relation to partial flag varieties}

Let $0<r_1<r_2<...<r_s<r$ be a collection of positive integers. 
Denote by $\fl(r_1, \dots, r_s; r)$ the corresponding partial flag
variety, i.e., the configuration space of $s$ subspaces
$$V_1 \subset \dots \subset V_s \subset V_{s+1}=\C^r$$
such that $\dim V_j =  r_j$, $j\le s$.

Denote by $L_j$ the line bundle $\wedge^{r_j}(V_j)$.
These bundles can be considered as
generators of the Picard group of $\fl(r_1, \dots, r_s; r)$. Then
for a vector $\chi = (\chi_1, \dots, \chi_s)$ one can define the
algebra of twisted differential operators on
$\D_{\chi,L_1,...,L_s}(\fl(r_1,...,r_s; r))$ following
Definition~\ref{dftd}. Abbreviating the notation, we will denote this algebra 
by $\D_\chi(\fl(r_1,...,r_s; r))$.

Now let $Y=\oplus_{i=1}^s {\rm Hom}(V_i,V_{i+1})$, and
$H=\prod_{i=1}^s GL(V_i)$. 
Let $\itw$ be the natural representation of $H$ on $Y$.
Note that the action of the remaining $GL_{r}$ commutes with $\itw$.
Denote by $d\itw$ the corresponding map from  the  
Lie algebra ${\rm Lie}H$ to the algebra of differential operators on $Y$. For  
$\chi = (\chi_1, \dots, \chi_s) \in \C^s$ let
$\chi\tr = \sum \chi_i \cdot \tr_i$ be a character of this Lie algebra.

\begin{theorem}\label{Q-Gr}
Let $\chi = (\chi_1, \dots, \chi_s) \in \C^s$. Then we have a
$GL_{r}$-equivariant isomorphism of filtered algebras
$$
\qr\left(\D\left(\bigoplus_{i=1}^s \hom(V_i, V_{i+1})\right), 
\bigoplus_{i=1}^{s}\gl_{r_i}, d\itw - \chi\tr \right)
\cong  \D_{{\chi}}(\fl(r_1, \dots, r_s; r)).$$
\end{theorem}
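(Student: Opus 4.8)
The plan is to realize both sides as reductions of differential operators on the representation space and to peel off the reduction one group at a time. First I would restrict attention to the open locus $Y^\circ\subset Y=\bigoplus_{i=1}^s\hom(V_i,V_{i+1})$ on which every map $\phi_i:V_i\to V_{i+1}$ is injective. Since $r_i<r_{i+1}$, the locus where a given $\phi_i$ drops rank has codimension $r_{i+1}-r_i+1\ge 2$, so $Y\setminus Y^\circ$ has codimension $\ge 2$; because $Y$ is a smooth affine space this forces the restriction maps $\C[Y]\to\C[Y^\circ]$ and $\D(Y)\to\D(Y^\circ)$ to be isomorphisms, compatibly with the order filtration, the $H=\prod_i GL(V_i)$-action, and the moment map $d\itw-\chi\tr$. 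On $Y^\circ$ the group $H$ acts freely, and sending a configuration to its flag of images $\mathrm{im}(\phi_s\cdots\phi_1)\subset\cdots\subset\mathrm{im}(\phi_s)\subset\C^r$ exhibits $\pi:Y^\circ\to\fl(r_1,\dots,r_s;r)$ as a principal $H$-bundle; the determinant characters of $H$ produce, via $\pi$, exactly the line bundles $L_j=\wedge^{r_j}V_j$ (up to the usual dualization conventions), so that the central character $\chi\tr$ matches the twist by $\bigotimes_j L_j^{\chi_j}$.

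Next I would split $\mathfrak h=\bigoplus_i\gl_{r_i}=\mathfrak h^{ss}\oplus\mathfrak z$, where $\mathfrak h^{ss}=\bigoplus_i\gs_{r_i}$ and $\mathfrak z=\C^s$ is spanned by the scalars in each factor, and observe that $\chi\tr$ vanishes on $\mathfrak h^{ss}$. Since $\D(Y)=\D(Y^\circ)$ is a rational, hence completely reducible, $\mathfrak h$-module, Proposition~\ref{seqred}(iii) lets me reduce in two stages:
\begin{align*}
\qr\bigl(\D(Y),\mathfrak h,\ d\itw-\chi\tr\bigr)
&\cong \qr\Bigl(\qr\bigl(\D(Y),\mathfrak h^{ss},\ d\itw\bigr),\ \C^s,\ \E-\chi\Bigr)\\
&\cong \qr\bigl(\D(E),\ \C^s,\ \E-\chi\bigr)\ =\ \D_\chi(\fl).
\end{align*}
Here $E=Y^\circ/\prod_i SL(V_i)$ is the total space of the principal $(\C^\times)^s$-bundle over $\fl$ associated to $L_1,\dots,L_s$, the residual map $\overline{\mu_2}$ sends the $i$-th generator of $\mathfrak z$ to the descent of the scalar vector field, which is precisely the Euler field $\E_i$ on $E$, and the last equality is Definition~\ref{dftd}. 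The order filtration is preserved at every step, and the $GL_r=GL(V_{s+1})$-action commutes with $H$ and descends to the standard action on $\fl$, giving the asserted $GL_r$-equivariance.

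The hard part will be the middle isomorphism $\qr(\D(Y^\circ),\mathfrak h^{ss},d\itw)\cong\D(E)$: this is the assertion that quantization commutes with reduction along the principal $\prod_i SL(V_i)$-bundle $Y^\circ\to E$, with no higher correction terms. I would prove it either directly, using freeness of the action to descend invariant differential operators together with the vanishing of the relevant Lie-algebra cohomology (so that only $H^0$ contributes), or by passing to associated graded: one checks that $\gr$ commutes with the reduction and that the classical statement $\gr\,\qr(\D(Y),\mathfrak h,d\itw-\chi\tr)=\C[T^*\fl]$ holds, i.e. that $T^*\fl$ is the classical Hamiltonian reduction of $T^*Y$ by $H$ at the level set determined by $\chi\tr$. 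Since the right-hand side also satisfies $\gr\,\D_\chi(\fl)=\C[T^*\fl]$ as a flat deformation, a filtered map that is an isomorphism on associated graded is an isomorphism, completing the argument.
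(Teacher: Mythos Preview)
Your setup agrees with the paper's: both restrict to the open locus $Y^\circ$ of injective maps via the codimension~$\ge 2$ argument, observe that $H=\prod_i GL(V_i)$ acts freely there with quotient $\fl$, and produce a natural filtered $GL_r$-equivariant homomorphism $\eta$ from the reduction to $\D_\chi(\fl)$. Your two-stage decomposition $\mathfrak h=\mathfrak h^{ss}\oplus\mathfrak z$ is a reasonable reorganization, but it does not sidestep the essential difficulty; it just relocates it to the middle isomorphism $\qr(\D(Y^\circ),\mathfrak h^{ss},d\itw)\cong\D(E)$. (Incidentally, the last equality you write, $\qr(\D(E),\C^s,\E-\chi)=\D_\chi(\fl)$, is literally Definition~\ref{dftd} only for affine base; for the projective variety $\fl$ the paper uses the sheafified definition, and equating global sections with the global reduction needs its own argument.)

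The genuine gap is in how you handle the ``hard part.'' You say one \emph{checks} that $\gr$ commutes with reduction and that the classical Hamiltonian reduction of $T^*Y$ by $H$ is $\C[T^*\fl]$. In general one only has a surjection from the classical reduction onto $\gr$ of the quantum reduction, and the classical reduction itself could be strictly larger than $\C[T^*\fl]$: the free locus $Y^\circ$ controls only an open piece of the moment fiber, and functions on the full $\mu^{-1}(0)/\!\!/H$ need not all come from $T^*\fl$. The paper does not take this for granted. It first proves surjectivity of $\eta$ by factoring the (known to be surjective) map $U(\mathfrak{sl}_r)\twoheadrightarrow\D_\chi(\fl)$ of Proposition~\ref{Gr-U} through $\eta$. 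For injectivity it compares $GL_r$-multiplicities: using Crawley--Boevey's flatness theorem for the moment map of the doubled $A_s$-quiver with dimension vector $(r_1,\dots,r_{s+1})$ (so that multiplicities are constant under deformation of the moment value) together with Crawley--Boevey's lemma identifying the generic reduction with a semisimple orbit $GL_r/L$, it shows $m_1(W)\le \dim W^L=m_2(W)$ for every irreducible $W$. These two external inputs from \cite{CB} and \cite{EGO} are exactly what your ``one checks'' would have to supply; neither freeness on $Y^\circ$ nor generic Lie-algebra cohomology vanishing gives them.
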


\begin{proof}
Denote by $\inc(V_i, V_{i+1}) \subset \hom(V_i, V_{i+1})$ the open set consisting of inclusions.
We have a natural homomorphism 
$$
\D\left(\bigoplus_{i=1}^s \hom(V_i, V_{i+1})\right)\to
\D\left(\prod_{i=1}^s \inc(V_i, V_{i+1})\right),
$$
which is in fact an isomorphism, because the set of non-inclusions
has codimension $\ge 2$. 

Now, the group $H$ acts freely on 
$\prod_{i=1}^s \inc(V_i, V_{i+1})$, and the quotient is 
$\fl(r_1, \dots, r_s; r)$. Thus we have a natural 
$GL_{r}$-equivariant homomorphism 
of filtered algebras 
$$
\eta: \qr\left(\D\left(\bigoplus_{i=1}^s \hom(V_i, V_{i+1})\right), 
\bigoplus_{i=1}^{s}\gl_{r_i}, d\itw - \chi\tr \right)
\to  \D_{{\chi}}(\fl(r_1, \dots, r_s; r)).
$$
We are going to show that $\eta$ is an isomorphism. 

First let us show that $\eta$ is an epimorphism. 
For this purpose note that by Proposition \ref{Gr-U},
we have a surjective map $i: U({\mathfrak{sl}}_r)\to 
\D_{{\chi}}(\fl(r_1, \dots, r_s; r))$, and 
a map 
$$
\theta: U({\mathfrak{sl}}_r)\to 
\qr\left(\D\left(\bigoplus_{i=1}^s \hom(V_i, V_{i+1})\right), 
\bigoplus_{i=1}^{s}\gl_{r_i}, d\itw - \chi\tr \right),
$$
such that $\eta\circ \theta=i$. 
This implies that $\eta$ is surjective. 

Now let us show that $\eta$ is injective. 
For this purpose, it suffices to show that 
the associated graded map ${\rm gr}(\eta)$ of $\eta$ 
is injective. For this, it suffices to prove that 
the multiplicity $m_1(W)$ of every irreducible finite dimensional $GL_r$-module 
$W$ in the source of ${\rm gr}\eta$ is at most the multiplicity
$m_2(W)$ of this module in the target (note that by Proposition
\ref{Gr-U}(ii), $m_2(W)=\dim
W^L$, where $L$ is the Levi subgroup $\prod_i GL_{r_i-r_{i-1}}$ 
in $GL_r$). In our case, the source is the algebra of regular 
functions on the classical Hamiltonian reduction 
of the space of representations of the doubled quiver of type
$A_s$ with dimension vector $(r_1,...,r_{s+1})$ with respect to
the group $H$ (i.e., the last factor $GL_r$
is not included) with the zero value of the moment map. 
Since in this case the moment map is flat (as follows from
Theorem 1.1 in 
the paper \cite{CB}; it is important here that the last factor is
not included), $m_1(W)$ equals the multiplicity of $W$ in the 
algebra of functions on the classical Hamiltonian reduction as
above for a {\it generic} value of the moment map. 
But by a Lemma of Crawley-Boevey, 
this reduction is a semisimple coadjoint orbit
of $GL_r$, isomorphic to $GL_r/L$ (see e.g. \cite{EGO}, 
Lemma 2.6.8). This implies that $m_1(W)=m_2(W)$, and $\eta$ is an
isomorphism, as desired. 
\end{proof} 

\subsection{Sequence of Reductions}

Now we are ready to complete the proof, 
doing the reduction prescribed in  Theorem~\ref{DAHA-Q}
(for orientation of all edges towards the node)
step by step according to Proposition~\ref{seqred}.
Note that separating the node and the ``legs'' of our graph we have
$\gl(\alpha^{CM}) \cong \gl_{n\ell} \oplus \gl(\alpha^{CM})'$, where
$$\gl(\alpha^{CM})' \cong \bigoplus_{i=1}^m \g_i,
\qquad \g_i = \bigoplus_{j=1}^{d_i-1} \gl_{\alpha^{CM}_{(i,j)}}.
$$

\begin{prop}
We have in the notation of the Main Theorem a $\gl_{n\ell}$-equivariant isomorphism
$$\qr\left(\D(\rep_{\alpha^{CM}}(Q_{CM})), \gl(\alpha^{CM})', d\itw - \chi\tr\right) \cong
U_{\mu_1}^{\p_1}(\gs_{n\ell}) \otimes \dots \otimes U_{\mu_m}^{\p_m}(\gs_{n\ell}).$$
\end{prop}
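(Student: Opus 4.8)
The plan is to carry out the reduction by $\gl(\alpha^{CM})'$ leg by leg, reducing the statement to the flag-variety computation of Theorem~\ref{Q-Gr} together with the localization isomorphism of Proposition~\ref{Gr-U}. The first observation is that every arrow of $Q_{CM}$ lies on a single leg: the arrow emanating from $(i,d_i-1)$ into the node belongs to leg $i$, and the extra arrow $\s\to\o$ belongs to the last leg. Consequently the representation space splits as a direct sum $\rep_{\alpha^{CM}}(Q_{CM})=\bigoplus_{i=1}^m Y_i$, where $Y_i$ gathers the Hom-spaces of leg $i$, its final arrow landing in the node space $\C^{n\ell}$ (and, for $i=m$, with the initial arrow $\hom(\C,V_1)$ coming from $\s$). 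Hence $\D(\rep_{\alpha^{CM}}(Q_{CM}))=\bigotimes_{i=1}^m\D(Y_i)$, the factor $\g_i\subseteq\gl(\alpha^{CM})'$ acts only on $\D(Y_i)$, and the node group $GL_{n\ell}$ acts diagonally, providing the $\gl_{n\ell}$-equivariance. Applying Proposition~\ref{seqred}(iii) repeatedly, the reduction factorizes as
$$\qr\!\left(\D(\rep_{\alpha^{CM}}(Q_{CM})),\gl(\alpha^{CM})',d\itw-\chi\tr\right)\cong\bigotimes_{i=1}^m\qr\!\left(\D(Y_i),\g_i,d\itw-\chi\tr\right),$$
so it remains to identify each factor.

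For a leg $i<m$ one has $Y_i=\bigoplus_{j=1}^{d_i-1}\hom(V_j,V_{j+1})$ with $\dim V_j=n\ell j/d_i$ and $V_{d_i}=\C^{n\ell}$; this is exactly the input of Theorem~\ref{Q-Gr} for the flag variety $\fl(r_1,\dots,r_{d_i-1};n\ell)$ with $r_j=n\ell j/d_i$, reduced by $\g_i=\bigoplus_{j=1}^{d_i-1}\gl_{r_j}$. That theorem yields a $GL_{n\ell}$-equivariant isomorphism onto $\D_\chi(\fl(r_1,\dots,r_{d_i-1};n\ell))$, and Proposition~\ref{Gr-U}(i) identifies the latter with $U_{\mu_i}^{\p_i}(\gs_{n\ell})$; since the Levi blocks all have size $n\ell/d_i$, the parabolic is $\p(d_i,n\ell)$. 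For the last leg the vertex $\s$ (with $\alpha^{CM}_\s=1$), reduced together with leg $m$, contributes the arrow $\hom(\C,V_1)$, so that $Y_m$ is the input of Theorem~\ref{Q-Gr} for the longer flag $\fl(1,r_1,\dots,r_{d_m-1};n\ell)$. The additional one-dimensional step splits a line off the first Levi block and therefore produces precisely the parabolic $\p'(d_m,n\ell)$, giving $U_{\mu_m}^{\p_m}(\gs_{n\ell})$.

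It then remains to match the twist $\chi$ with the parabolic characters. Under Theorem~\ref{Q-Gr} the component of $\chi$ attached to $\tr$ on the factor $\gl_{r_j}$ becomes the weight $\omega_{r_j}$ in the character of the parabolic, because the defining line bundle of the $j$-th flag step is $\wedge^{r_j}V_j$. At a leg vertex $(i,j)$ the relevant twist is $\chi^{CM}_{(i,j)}=\lambda(c)_{(i,j)}-\partial_{(i,j)}=\lambda(c)_{(i,j)}-n\ell/d_i$ by the preceding Proposition, and summing $\chi^{CM}_{(i,j)}\,\omega_{n\frac{\ell}{d_i}j}$ over $j$ reproduces $\mu_i(n,\lambda(c))$. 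On the last leg the shifted value $\chi^{CM}_\o=\lambda(c)_\o-\partial_\o-\frac{k}{2}$ supplies the extra $-\frac{k}{2}\,\omega_n$, while the rank-one step at $\s$ contributes $\chi^{CM}_\s\,\omega_1=n\!\left(\frac{k}{2}-1\right)\omega_1$; together these give exactly the stated $\mu_m$. I expect the real obstacle to lie in this last, bookkeeping stage rather than in the structural splitting: one must check that passing from the $GL_{n\ell}$-equivariance of Theorem~\ref{Q-Gr} to the $\gs_{n\ell}$-action (the traceless quotient) is compatible with these weights, and that the special treatment of leg $m$ --- the choice of $\p'$ over $\p$ and the interplay between the $\gl_1$ at $\s$ and the central torus of $\gl_{n\ell}$ --- introduces no discrepancy in the overall scaling.
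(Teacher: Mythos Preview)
Your argument is correct and follows exactly the route taken in the paper: split $\rep_{\alpha^{CM}}(Q_{CM})$ leg by leg, factor the reduction as a tensor product, and then apply Theorem~\ref{Q-Gr} followed by Proposition~\ref{Gr-U} to each factor. In fact you supply considerably more detail than the paper does---the explicit identification of the parabolics $\p(d_i,n\ell)$ and $\p'(d_m,n\ell)$ from the block sizes, and the matching of the twists $\chi^{CM}$ with the weights $\mu_i$---whereas the paper's proof simply asserts that combining the two cited results yields the statement.
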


\begin{proof}
We have
$$\rep_{\alpha^{CM}}(Q_{CM}) \cong \bigoplus_{i=1}^m \rep_{\alpha^{CM}}(Q_{CM})_i,$$
such that $\g_i$ acts only on  $\rep_{\alpha^{CM}}(Q_{CM})_i$. Therefore the left hand side
is isomorphic to
$$\bigotimes_{i=1}^m \qr\left(\D(\rep_{\alpha^{CM}}(Q_{CM}))_i, \gl_i, d\itw - \chi\tr\right).$$
Combining for each factor Theorem~\ref{Q-Gr} 
with Theorem~\ref{Gr-U}, we obtain the statement of the Proposition.
\end{proof}

At last note that $\gl_{n\ell} \cong \C \oplus \gs_{n\ell}$ and that $\C$ acts on the right hand side by
a scalar, so the result of reduction with respect to $\C$ is either zero (not in our case) or the initial algebra.
Applying the reduction with respect to $\gs_{n\ell}$, we obtain the statement of the Main Theorem.

\section{Construction of representations}

Corollary \ref{consrep} together with Proposition \ref{seqred}
immediately provides a way of constructing many finite dimensional 
representations of spherical symplectic reflection algebras of
types $D_4,E_6,E_7,E_8$, by taking $U_i$ to be finite dimensional
representations. Such representations are mostly new, and would
be interesting to study in detail, which we plan to do in the
future. But they defined only for 
a certain discrete set of values of parameters. Below we
construct the finite dimensional representations which are
defined on hyperplanes (of codimension 1) in the space of
parameters. These representations were discovered in \cite{EM}. 

\subsection{Some isomorphisms of enveloping algebra quotients.} 

Let us establish some isomorphisms between different
$U_\mu^\p(\g)$, to be used in the next subsection,
(they are a peculiarity of $\g=\gs_r$). Let $\p_1$ be a
parabolic subalgebra of $\gs_r$ and $\mu_1$ a character of $\p_1$.
Let $m_1, \dots, m_k$ be the sizes of blocks of the Levi
subalgebra $\l_1\subset\p_1$. Let $\p_2$ be the parabolic
subalgebra with the sizes of blocks $m_1, \dots, m_{i+1}, m_i,
\dots, m_k$ (two neighboring blocks transposed). Consider the
permutation $\sigma\in S_r$
which is the transposition of the $i$-th and the $i+1$-th blocks.

The following proposition is known, but for reader's convenience 
we include a proof (based on quantum reduction) 
in the appendix to this paper.

\begin{prop}\label{prlevi}
We have  $U^{\p_1}_{\mu_1}(\g) \cong U^{\p_2}_{\mu_2}(\g)$ with
$\mu_2=\sigma(\mu_1+\rho)-\rho$, where $\rho$ is the half-sum of
positive roots.
\end{prop}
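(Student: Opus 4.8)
The plan is to trade the enveloping-algebra quotients for twisted differential operators on partial flag varieties, and then implement the block transposition $\sigma$ as a reflection at the single affected vertex, realized by a partial Fourier transform together with the self-duality of a Grassmannian. First I would apply Proposition~\ref{Gr-U}(i) to $\g=\gs_r$, giving $U^{\p_1}_{\mu_1}(\g)\cong\D_{\mu_1}(G/P_1)$ and $U^{\p_2}_{\mu_2}(\g)\cong\D_{\mu_2}(G/P_2)$ with $G=SL_r$. Writing the Levi block sizes of $\p_1$ as $m_1,\dots,m_k$ and $r_l=m_1+\dots+m_l$, the two flags are $G/P_1=\fl(\dots,r_{i-1},r_i,r_{i+1},\dots;r)$ and $G/P_2=\fl(\dots,r_{i-1},r'_i,r_{i+1},\dots;r)$, differing only in the $i$-th step, with $r'_i=r_{i-1}+r_{i+1}-r_i$. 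So it suffices to build an isomorphism $\D_{\mu_1}(G/P_1)\cong\D_{\mu_2}(G/P_2)$; note these varieties are genuinely different, so the isomorphism cannot be geometric and must come from the reduction.

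Next I would present both algebras as quantum Hamiltonian reductions via Theorem~\ref{Q-Gr}, using the chain $Y_j=\bigoplus_l\hom(V_l,V_{l+1})$ with reduction group $\bigoplus_l\gl_{r_l}$. By Proposition~\ref{seqred}(iii) the reduction can be done one factor at a time, and I would reduce by the vertex-$i$ factor $GL(V_i)$ first. Since that factor acts only on $W:=\hom(V_{i-1},V_i)\oplus\hom(V_i,V_{i+1})$, everything comes down to a $GL(V_{i-1})\times GL(V_{i+1})$-equivariant local isomorphism at vertex $i$. There I would apply the symplectic Fourier transform $\mathcal{F}$ on the summand $\hom(V_i,V_{i+1})$, identifying $\D(\hom(V_i,V_{i+1}))$ with $\D(\hom(V_{i+1},V_i))$; this turns both adjacent maps into maps \emph{into} $V_i$, replacing $W$ by $\hom(U,V_i)$ with $U:=V_{i-1}\oplus V_{i+1}$ and $N:=\dim U=r_{i-1}+r_{i+1}$, and it replaces the $GL(V_i)$-moment map by its Fourier transform, flipping the sign of the $\hom(V_i,V_{i+1})$-contribution and shifting the character by $\dim V_{i+1}=r_{i+1}$ (from the reordering $x\partial\leftrightarrow\partial x$). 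Reducing $\D(\hom(U,V_i))$ by $GL(V_i)$ yields twisted differential operators on $\Gr(r_i,N)$ (the $s=1$ case of Theorem~\ref{Q-Gr}), and the reflection is then the tautological duality $\Gr(r_i,N)\cong\Gr(N-r_i,N)=\Gr(r'_i,N)$, $V_i\mapsto V_i^{\perp}$. Running the Fourier step in reverse produces the vertex-$i$ factor of the $Y_2$ presentation, with $V_i$ replaced by a space of dimension $r'_i$. Reducing by the remaining factors $\prod_{l\ne i}GL(V_l)$, which are untouched by this construction, then gives the global isomorphism $\D_{\mu_1}(G/P_1)\cong\D_{\mu_2}(G/P_2)$.

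Finally I would track the parameter. Under $U^\p_\mu(\g)\cong\D_\mu(G/P)$ the character data $\chi$ of Theorem~\ref{Q-Gr} correspond to the weight $\mu$, written in terms of the fundamental weights $\omega_{r_l}$ attached to the line bundles $\wedge^{r_l}V_l$; the reflection sends $\omega_{r_i}$ to $\omega_{r'_i}$, i.e. acts by the linear part of $\sigma$. The determinant/half-form twists collected in the two Fourier steps — the sign flip together with the dimension shifts by $r_{i-1}$ and $r_{i+1}$ — are exactly what assembles the linear action of $\sigma$ into the shifted action, producing $\mu_2=\sigma(\mu_1+\rho)-\rho$ with the $\rho$-correction coming from those $\dim$-shifts. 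I expect this bookkeeping — converting the $\chi$-parameters and accumulated twists into the clean dot-action formula and checking every sign — to be the main obstacle; the geometric reflection $\Gr(r_i,N)\cong\Gr(r'_i,N)$ itself is transparent.
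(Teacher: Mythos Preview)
Your strategy is essentially the paper's: pass to twisted differential operators via Proposition~\ref{Gr-U}, present these as a quantum reduction via Theorem~\ref{Q-Gr}, and implement the block swap by a Fourier transform together with the Grassmannian duality. The paper organizes the argument a little differently: it first isolates the two-block case and proves it by a single Fourier transform $\D(\C^{m_1}\otimes V)\cong\D(\C^{m_1}\otimes V^*)$ plus the canonical identification $\inc(\C^{m_1},V^*)/GL_{m_1}\cong\inc(\C^{m_2},V)/GL_{m_2}$; then for general $k$ it views $G/P_1$ as a homogeneous bundle over $G/P$ (with the $i$-th and $(i+1)$-st blocks merged) whose fibre is $\fl(m_i;m_i+m_{i+1})$, and applies a \emph{fibrewise} Fourier transform, so the parameter computation reduces literally to the two-block case. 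Your version keeps the full chain presentation and performs the reflection at vertex~$i$ in one shot; this is the quantised reflection-functor picture, and it works, but the $\rho$-bookkeeping is less transparent than in the paper's reduction to two blocks.

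One point to tighten. After your forward Fourier, reducing $\D(\hom(U,V_i))$ by $GL(V_i)$ acting on the \emph{target} lands you, via kernels of surjections, canonically on $\Gr(r_i',U)$; no inner-product ``$\perp$'' is needed or wanted, since an orthogonal complement would break the $GL(V_{i-1})\times GL(V_{i+1})$-equivariance you rely on. Correspondingly, un-reducing $\Gr(r_i',U)$ gives $\hom(V_i',U)=\hom(V_i',V_{i-1})\oplus\hom(V_i',V_{i+1})$ with $GL(V_i')$ on the \emph{source}, so the second Fourier must be applied to the $V_{i-1}$-edge $\hom(V_i',V_{i-1})$, not to the $V_{i+1}$-edge again, in order to recover the chain $\hom(V_{i-1},V_i')\oplus\hom(V_i',V_{i+1})$. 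With that correction the equivariance and the parameter shifts line up, and the two dimension shifts you collect are $r_{i+1}$ (forward Fourier) and $r_{i-1}$ (backward Fourier), combining to the expected $\sigma(\mu_1+\rho)-\rho$.
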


More generally, let $\p_1$ and $\p_2$ be parabolic subalgebras of $\gs_r$ with
Levi subalgebras $\l_1$ and $\l_2$. By $\rho_1$ and $\rho_2$
denote the half-sums of
positive roots of $\l_1$ and $\l_2$ respectively.
Suppose that $\l_1 \cong \l_2$, then there exists a 
``block-wise'' permutation $\sigma$
such that $\sigma(\rho_1) = \rho_2$. An advantage of this permutation is that
for a weight $\mu_1: \p_1 \to \C$ we have that $\sigma(\mu_1+\rho)-\rho$ is
a weight of $\p_2$. 

\begin{corollary}\label{prlevi1}
We have  $U^{\p_1}_{\mu_1}(\g) \cong U^{\p_2}_{\mu_2}(\g)$ with
$\mu_2=\sigma(\mu_1+\rho)-\rho$.
\end{corollary}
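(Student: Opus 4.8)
The plan is to reduce the Corollary to Proposition~\ref{prlevi} by writing the block-wise permutation $\sigma$ as a composition of transpositions of neighboring blocks and iterating. Since $\l_1\cong\l_2$, the blocks of $\l_2$ are a rearrangement of those of $\l_1$, so $\sigma$ induces a permutation of the $k$ blocks which can be written (e.g.\ by a bubble-sort argument) as a product $\sigma=\tau_N\cdots\tau_1$, where each $\tau_j$ is a transposition of two blocks that are adjacent at the $j$-th stage. This produces a chain of parabolic subalgebras $\p_1=\p^{(0)},\p^{(1)},\dots,\p^{(N)}=\p_2$, all with Levi subalgebra isomorphic to $\l_1$, in which $\p^{(j)}$ differs from $\p^{(j-1)}$ by swapping the two neighboring blocks moved by $\tau_j$.

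First I would apply Proposition~\ref{prlevi} at each stage, obtaining algebra isomorphisms $U^{\p^{(j-1)}}_{\mu^{(j-1)}}(\g)\cong U^{\p^{(j)}}_{\mu^{(j)}}(\g)$ with $\mu^{(j)}=\tau_j(\mu^{(j-1)}+\rho)-\rho$ and $\mu^{(0)}=\mu_1$. Composing these isomorphisms gives $U^{\p_1}_{\mu_1}(\g)\cong U^{\p_2}_{\mu_2}(\g)$ with $\mu_2=\mu^{(N)}$. To identify $\mu^{(N)}$ I would use that the assignment $w\cdot\nu:=w(\nu+\rho)-\rho$ is a genuine left action of the Weyl group, since $w_1\cdot(w_2\cdot\nu)=w_1\big(w_2(\nu+\rho)-\rho+\rho\big)-\rho=(w_1w_2)(\nu+\rho)-\rho$. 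Telescoping the single-step formulas then yields $\mu^{(N)}=(\tau_N\cdots\tau_1)(\mu_1+\rho)-\rho=\sigma(\mu_1+\rho)-\rho$, which is exactly the weight $\mu_2$ in the statement.

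The step that requires care, and the main obstacle, is to guarantee that Proposition~\ref{prlevi} is genuinely applicable at every stage, i.e.\ that each intermediate weight $\mu^{(j)}$ is an honest character of $\p^{(j)}$ and that each $\tau_j$ is a transposition of truly neighboring blocks. Both are ensured by choosing the decomposition of $\sigma$ so that every $\tau_j$ is block-wise: then $\tau_j$ carries the block structure of $\p^{(j-1)}$ to that of $\p^{(j)}$ and sends simple Levi roots to simple Levi roots, so, exactly as in the remark preceding the Corollary, $\tau_j(\mu^{(j-1)}+\rho)-\rho$ pairs trivially with the coroots of the Levi of $\p^{(j)}$ and hence defines a character. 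With this compatibility verified at each step, the chain of applications of Proposition~\ref{prlevi} is legitimate and the Corollary follows.
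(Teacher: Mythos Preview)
Your proposal is correct and is exactly the intended argument: the paper gives no separate proof of the Corollary, treating it as an immediate consequence of Proposition~\ref{prlevi} obtained by decomposing the block-wise permutation into a chain of adjacent block swaps and iterating. Your care in checking that each intermediate $\mu^{(j)}$ is a character of $\p^{(j)}$ and that the dot action telescopes is appropriate and fills in precisely what the paper leaves implicit.
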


\def \tp {\tilde{\p}}

Now let $\p''(s,r)$ 
be the parabolic subalgebra in $\g={\mathfrak{sl}}_r$
generated by upper triangular matrices and the elements 
and $f_i=E_{i+1,i}$ for $i\ne
\frac{r}{s}-1$ and $\frac{r}{s}$ not dividing $i$. 
Also, denote by $\tp''(s,r)$ the
subalgebra generated by $\p''(s,r)$ and 
$f_{r/s}=E_{\frac{r}{s}+1,\frac{r}{s}}$.

\medskip

\psfig{file=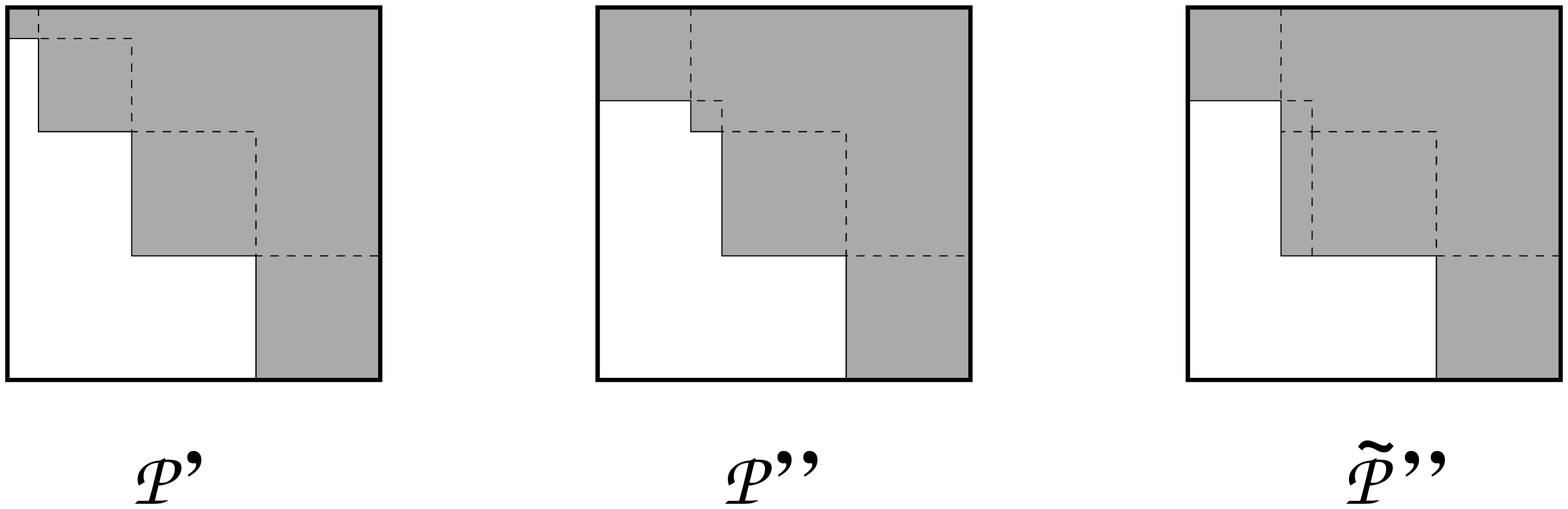,width=12.5cm}

\begin{corollary}\label{prlevi2}
We have $U^{\p''(s,r)}_{\nu}(\g) \cong
U^{\p'(s,r)}_{\mu}(\g)$, where
$$\nu^1 =0, \ \mu^{r/s-1}=0,\ \nu^{r/s-1} =-\mu^1-r/s,\ \
\nu^{r/s} =\mu^1+\mu^{r/s}+r/s-1$$
and $\nu^i = \mu^i$ for all other $i$. 
\end{corollary}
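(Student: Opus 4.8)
The plan is to deduce Corollary~\ref{prlevi2} directly from Proposition~\ref{prlevi} (equivalently its block version Corollary~\ref{prlevi1}), since $\p'(s,r)$ and $\p''(s,r)$ turn out to have isomorphic Levi subalgebras that differ only by transposing two neighboring blocks. First I would read off the block structures from the definitions. The Levi roots of $\p'(s,r)$ are the $\alpha_i$ with $i>1$ and $r/s\nmid i$, so its cut positions are $\{1,\,r/s,\,2r/s,\dots\}$ and its Levi has block sizes $(1,\,r/s-1,\,r/s,\dots,r/s)$. For $\p''(s,r)$ the cut positions are $\{r/s-1,\,r/s,\,2r/s,\dots\}$, giving block sizes $(r/s-1,\,1,\,r/s,\dots,r/s)$. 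Thus the two parabolics agree except in the first width-$r/s$ super-block, which is split as $1+(r/s-1)$ for $\p'$ and as $(r/s-1)+1$ for $\p''$. Hence their Levis are isomorphic, related by the transposition $\sigma$ of the first two blocks, and Proposition~\ref{prlevi} applies, yielding the isomorphism with $\nu=\sigma(\mu+\rho)-\rho$.

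Next I would set up coordinates and carry out the $\rho$-shift. Writing weights as diagonal tuples $(\mu_1,\dots,\mu_r)$ with fundamental-weight labels $\mu^i=\mu_i-\mu_{i+1}$, and taking $\rho_i=r-i$ (so $\rho_i-\rho_{i+1}=1$), the block transposition is $\sigma(1)=r/s$ and $\sigma(j)=j-1$ for $2\le j\le r/s$, identity elsewhere. The constraint that $\mu$ be a character of $\p'(s,r)$ forces $\mu^i=0$ off $\{1,r/s,2r/s,\dots\}$; in particular $\mu^{r/s-1}=0$ and $\mu$ is constant, say equal to $a$, on positions $2,\dots,r/s$. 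Symmetrically $\nu^1=0$ records that $\nu$ is a $\p''(s,r)$-character. These are exactly the two ``automatic'' conditions listed in the statement.

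I would then compute $\nu_i=(\mu+\rho)_{\sigma^{-1}(i)}-\rho_i$ and pass to labels $\nu^i=\nu_i-\nu_{i+1}$. Using constancy of $\mu$ on the block one finds $\nu_i=a-1$ for $1\le i\le r/s-1$, $\nu_{r/s}=a+\mu^1+r/s-1$, and $\nu_i=\mu_i$ for $i>r/s$; taking differences gives $\nu^i=0$ for $i\le r/s-2$, then $\nu^{r/s-1}=-\mu^1-r/s$ and $\nu^{r/s}=\mu^1+\mu^{r/s}+r/s-1$ (rewriting $\mu_{r/s+1}=a-\mu^{r/s}$), and $\nu^i=\mu^i$ for $i>r/s$. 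This is precisely the asserted relation between $\nu$ and $\mu$.

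The computation is entirely routine given Proposition~\ref{prlevi}; the only real work is the $\rho$-bookkeeping, where the integer shifts $r/s$ and $r/s-1$ appear as $\rho_1-\rho_{r/s}=r/s-1$ from the long jump $1\mapsto r/s$ together with the uniform $-1$ from sliding the size-$(r/s-1)$ block down by one. The main thing to keep straight is the translation between the diagonal-coordinate presentation and the fundamental-weight labels, together with the fact that $\mu$ is constant on the transposed block. The only genuine subtlety is degenerate small cases, e.g.\ $r/s=2$, where the three distinguished indices $1$, $r/s-1$, $r/s$ collide; these I would dispose of by direct inspection, the generic argument above being written for $r/s\ge 3$.
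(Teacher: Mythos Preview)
Your proposal is correct and follows precisely the route the paper takes: the paper's entire proof is the single sentence ``The corollary follows from Proposition~\ref{prlevi},'' and you have carried out that application in full detail, correctly identifying the block structures of $\p'(s,r)$ and $\p''(s,r)$, the relevant transposition $\sigma$, and the $\rho$-shift computation. Your observation about the degenerate case $r/s=2$ (where the indices $1$ and $r/s-1$ coincide and $\p'=\p''$) is a genuine edge case the paper does not comment on; it does not affect the argument's validity for $r/s\ge 3$, and in the paper's applications one has $r/s=n$, so this is only the rank-$2$ case.
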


\begin{proof}
The corollary follows from Proposition~\ref{prlevi}.
\end{proof}

\subsection{The open orbit lemma} 

Let $G=PGL_{n\ell}(\Bbb C)$. 
Let $P_i$ be the parabolic subgroups of $G$ corresponding to 
the Lie algebras ${\mathfrak{p}}_i$, introduced in Theorem
\ref{thmain}. Let $\tilde P''(s,r)$ be the parabolic subgroup of $G$ 
corresponding to the Lie algebra $\tp''(s,r)$. 

\begin{lemma}\label{openorb} 
The group $G$ has an open orbit $X_*$ on the space 
$$
X:=G/P_1\times...\times G/P_{m-1}\times G/\tilde P''(\ell,n\ell),
$$
and the action of $G$ on this orbit is free. 
\end{lemma}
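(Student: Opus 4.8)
The plan is to recast the statement as an assertion about representations of the star-shaped quiver and then read it off from Kac's theorem. Orient every edge of $D_\Gamma$ towards the node $\n$ and call the resulting quiver $Q$ (this is $Q_{CM}$ with the vertex $\s$ and the edge $\s\to\o$ deleted). Put $\C^{n\ell}$ at $\n$, and encode a point of $X$ by the flags it names: a point of $G/P_i$ ($i<m$) is a flag in $\C^{n\ell}$ with jumps at the multiples of $n\ell/d_i$, while a point of $G/\tilde P''(\ell,n\ell)$ is a flag of dimensions $n-1,2n,3n,\dots,(\ell-1)n$. Reading each chain of subspaces, together with the inclusions of consecutive members, as a representation of $Q$ gives a $G$-equivariant identification of $X$ with the quotient of the open locus $\rep_\be^{\mathrm{inj}}(Q)\subset\rep_\be(Q)$, where all arrows along the legs are injective, by $\prod_{(i,j)}GL_{\be_{(i,j)}}$ (the automorphisms fixing the node). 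Here $\be$ is the dimension vector equal to $n$ times the affine marks at every vertex, corrected by $-1$ at the outermost vertex of the longest leg; I am using that $P(d_i,n\ell)$ reproduces exactly $n$ times the marks along leg $i$, and that $\tilde P''(\ell,n\ell)$ does the same along leg $m$ save for a single $-1$ at $\o=(m,1)$. Under this dictionary, $G$-orbits on $X$ correspond to $GL(\be)$-orbits on $\rep_\be^{\mathrm{inj}}(Q)$, and the $GL_{n\ell}$-stabilizer of a configuration is the image, under projection to the node factor, of the automorphism group of the associated representation.

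Thus $\be=n\delta-\alpha_\o$ with $\delta$ the basic imaginary root and $\o=(m,1)$ the affinizing vertex. Since $\delta$ spans the radical of the symmetrized Euler form of an affine quiver, $(\be,\be)=(\alpha_\o,\alpha_\o)=2$, so the Tits form gives $\langle\be,\be\rangle=1$ and $\be=(n-1)\delta+\theta$ ($\theta$ the highest root of $D_\Gamma^0$) is a positive real root; as a check this is equivalent to the equality $\dim G=\dim X$, which one confirms case by case for $D_4,E_6,E_7,E_8$. By Kac's theorem a real root carries a unique indecomposable representation $M$, and $M$ is rigid and a brick: $\mathrm{Ext}^1(M,M)=0$ and $\enm(M)=\C$. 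Rigidity forces the $GL(\be)$-orbit of $M$ to be open (hence dense) in $\rep_\be(Q)$, and $\enm(M)=\C$ gives $\mathrm{Aut}(M)=\C^\times$, the global scalars.

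It remains to transport both properties to $X$. The locus $\rep_\be^{\mathrm{inj}}(Q)$ is open, $GL(\be)$-stable, and nonempty (the leg dimensions increase towards $\n$, so injective leg maps exist), hence it meets the dense orbit of $M$ and therefore contains it; so $M$ represents an honest point $x_0\in X$ whose $G$-orbit $X_*:=G\cdot x_0$ is open. Its $GL_{n\ell}$-stabilizer is the image of $\mathrm{Aut}(M)=\C^\times$ in the node factor, i.e. the scalar matrices, which is trivial in $G=PGL_{n\ell}$; as stabilizers are constant along an orbit, $G$ acts freely on all of $X_*$.

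I expect the only real work to be the bookkeeping of the first paragraph: verifying that $P(d_i,n\ell)$ and $\tilde P''(\ell,n\ell)$ produce precisely the dimension vector $n\delta-\alpha_\o$, so that the defect sits at $\o$ and nowhere else, and making the flag-to-representation dictionary tight enough that orbits and stabilizers match exactly, including the genericity point that the brick $M$ lands in the injective locus. Once $\be$ is identified as a real root, the input from quiver theory (Kac's theorem) is completely standard.
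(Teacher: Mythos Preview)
Your proof is correct and follows essentially the same approach as the paper: both identify the dimension vector as $\beta=n\delta-\alpha_\o=(n-1)\delta+\theta$, observe it is a positive real root, and invoke Kac's theorem to obtain a unique indecomposable (brick) representation whose orbit is open with scalar automorphisms. The paper's proof is a terse three-sentence sketch, whereas you have spelled out the flag-to-quiver dictionary, the Tits form computation, and the passage from $GL(\beta)$-orbits on the injective locus to $G$-orbits on $X$ in full detail.
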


\begin{proof}
The result follows from the fact 
that for any real positive root $\beta$ of a simply laced affine
root system, a generic representation of the corresponding affine quiver 
with dimension vector $\beta$ has one-dimensional endomorphism algebra 
(in particular, is indecomposable), and such a representation is
unique (Kac's theorem, \cite{Ka}). Namely, in our case,
$\beta=n\delta-\alpha_0=(n-1)\delta+\theta$, where $\theta$ is 
the maximal root of the corresponding finite root system. 
So $\beta$ is a real root and we can apply Kac's theorem.
\end{proof} 

\subsection{A representation of
$U_{\mu_1}^{\p_1}\otimes...\otimes U_{\mu_m}^{\p_m}$}

Let $\tilde\p \supset \p$ 
be the parabolic subalgebra of $\g$ generated by $\p$ and $f_\alpha$ for
some simple root $\alpha$. Suppose that $\nu$ is a character of
$\p$, and $\langle\nu,\alpha\rangle\in\ZZ_{\ge0}$.
Then the irreducible highest weight representation 
$L_\nu$ of the Levi subgroup of $\tilde L\subset\tilde P$ is finite dimesional.
The epimorphism $\tilde P\to\tilde L$ 
defines the structure of a $\tilde P$-module 
on $L_\nu$. 

Now let $\nu_m$ be the weight related to $\mu_m$ as in Corollary
\ref{prlevi2}. Assume that $\nu_m^n:=\langle
\nu_m,\alpha_n\rangle$ is a positive integer, and let  
$L:=L_{\nu_m}$ be the corresponding finite dimensional module 
over $\tp_{\ell,n\ell}$. Then $L$ is naturally a representation
of $\hat\p:=\p_1\times...\times \p_{m-1}\times \tilde \p''(\ell,n\ell)$, 
with $\p_i$ acting through the characters $\mu_i$. 

Let $x\in X_*$ be a point in the open orbit, and $x'$ a preimage
of $X$ in $G^m$, and $B$ the formal neighborhood of $x'$ in
$G^m$. Let $Y'$ be the space of regular functions on $B$
with values in $L$, and $Y$ be the space of $f\in Y'$ such that 
for any $z\in \hat\p$, $R_zf=-\pi_L(z)f$, where $R_z$ is the
vector field of right translation by $z$. Thus, by Lemma
\ref{openorb}, $Y$ can be identified with the space of 
$L$-valued regular functions 
on the neighborhood of $1$ in $G$, i.e. $(S\g)^*\otimes L$.   

\begin{prop}\label{repre} The action of $\g^m$ on 
$Y$ by left translations makes it into a representation of 
$U_{\mu_1}^{\p_1}\otimes...\otimes U_{\mu_m}^{\p_m}$.
\end{prop}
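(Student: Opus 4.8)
The plan is to show that, as a module over $\gs_{n\ell}^{\oplus m}$, the space $Y$ is a (completed) outer tensor product of $m$ spaces of sections of homogeneous bundles, and that the $i$-th factor is a module over $U_{\mu_i}^{\p_i}(\gs_{n\ell})$. First I would exploit the product structure of the formal neighborhood. Writing $x'=(x'_1,\dots,x'_m)$, we have $B=B_1\times\cdots\times B_m$ with $B_i$ the formal neighborhood of $x'_i$ in $G$, so that $Y'=\mathcal O(B_1)\,\widehat\otimes\cdots\widehat\otimes\,\mathcal O(B_m)\otimes L$. Since $\hat\p=\p_1\oplus\cdots\oplus\p_{m-1}\oplus\tp''(\ell,n\ell)$ acts by right translations one leg at a time, the defining condition $R_zf=-\pi_L(z)f$ decouples: for $z$ supported on the $i$-th leg with $i<m$ it reads $R_zf=-\mu_i(z)f$ (a scalar constraint in the $B_i$-variables), and for $i=m$ it reads $R_zf=-\pi_L(z)f$ with $\pi_L(z)$ acting on $L$. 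Hence $Y\cong S_1\,\widehat\otimes\cdots\widehat\otimes\,S_m$, where for $i<m$ the factor $S_i$ is the space of sections of the line bundle on $G/P_i$ attached to $\mu_i$ over the formal neighborhood of the image of $x'_i$, and $S_m$ is the space of sections of $G\times_{\tilde P''(\ell,n\ell)}L$ over the formal neighborhood of the image of $x'_m$ in $G/\tilde P''(\ell,n\ell)$. Because left translation by the $i$-th copy of $\gs_{n\ell}$ only differentiates in the $i$-th factor and commutes with every $R_z$, the $\gs_{n\ell}^{\oplus m}$-action preserves this factorization, the $i$-th copy acting solely on $S_i$.

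Next I would check that each $S_i$ with $i<m$ is a module over $U_{\mu_i}^{\p_i}(\gs_{n\ell})$. Twisted differential operators are local, so the algebra $\D_{\mu_i}(G/P_i)$ of global twisted differential operators acts on sections over any formal subscheme, in particular on $S_i$; moreover the $\gs_{n\ell}$-action on $S_i$ is exactly the one induced by the natural homomorphism $U(\gs_{n\ell})\to\D_{\mu_i}(G/P_i)$. By Proposition~\ref{Gr-U}(i) the induced map $i_{\mu_i}:U_{\mu_i}^{\p_i}(\gs_{n\ell})\to\D_{\mu_i}(G/P_i)$ is an isomorphism, so the action on $S_i$ factors through $U_{\mu_i}^{\p_i}(\gs_{n\ell})$.

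For the last factor I would first trade the vector bundle for a line bundle. As $\tp''(\ell,n\ell)$ is generated by $\p''(\ell,n\ell)$ and $f_n$, the projection $\pi:G/P''\to G/\tilde P''(\ell,n\ell)$ (with $P''$ the parabolic of Lie algebra $\p''(\ell,n\ell)$) has $\PP^1$ fibers, and relative Borel--Weil, valid because $\langle\nu_m,\alpha_n\rangle\in\ZZ_{\ge0}$, gives $G\times_{\tilde P''(\ell,n\ell)}L\cong\pi_*\mathcal L_{\nu_m}$, where $\mathcal L_{\nu_m}$ is the line bundle on $G/P''$ attached to $\nu_m$. Hence $S_m$ is, as a $\gs_{n\ell}$-module, the space of sections of $\mathcal L_{\nu_m}$ along the preimage of the formal neighborhood of the image of $x'_m$, i.e. along a formal neighborhood of a $\PP^1$-fiber in $G/P''$. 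The argument of the previous paragraph then applies verbatim: the action factors through $\D_{\nu_m}(G/P'')\cong U_{\nu_m}^{\p''(\ell,n\ell)}(\gs_{n\ell})$ by Proposition~\ref{Gr-U}(i), and the latter is isomorphic to $U_{\mu_m}^{\p_m}(\gs_{n\ell})$ by Corollary~\ref{prlevi2}. Taking the outer tensor product of the factors, $Y$ becomes a module over $U_{\mu_1}^{\p_1}(\gs_{n\ell})\otimes\cdots\otimes U_{\mu_m}^{\p_m}(\gs_{n\ell})$, as desired.

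\emph{The main obstacle is precisely the last factor.} One must pass from the rank-$\dim L$ bundle $G\times_{\tilde P''(\ell,n\ell)}L$ on $G/\tilde P''(\ell,n\ell)$, where Proposition~\ref{Gr-U} does not directly apply, to a line bundle on the larger flag variety $G/P''$, and then verify that the relative Borel--Weil isomorphism, the replacement of the formal neighborhood of a point by that of an entire $\PP^1$-fiber, and the character shift of Corollary~\ref{prlevi2} are all compatible with the left $\gs_{n\ell}$-action and with the conventions tying $U_\mu^\p$ to $\D_\mu(G/P)$. The remaining steps are formal consequences of the locality of differential operators and the product structure of $B$.
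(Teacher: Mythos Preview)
Your argument is essentially a fleshed-out version of the paper's two-line proof, which simply asserts that ``by construction'' $Y$ is a module over $U_{\mu_1}^{\p_1}\otimes\cdots\otimes U_{\mu_{m-1}}^{\p_{m-1}}\otimes U_{\nu_m}^{\p''(\ell,n\ell)}$ and then invokes Corollary~\ref{prlevi2}. Your route through twisted differential operators and Proposition~\ref{Gr-U} is exactly how one justifies the ``by construction'' step, and the final appeal to Corollary~\ref{prlevi2} is identical.

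One slip to correct: the projection $G/P''\to G/\tilde P''(\ell,n\ell)$ does \emph{not} have $\PP^1$ fibers. Adding $f_n$ merges the size-$1$ block (at position $n$) with the adjacent size-$n$ block, so the relevant Levi factor grows from $GL_1\times GL_n$ to $GL_{n+1}$, and the fiber is $GL_{n+1}/P_{1,n}\cong\PP^n$. This is also why the paper records $L=S^q\C^{n+1}$ with $\dim L=\binom{n+q}{n}$ rather than $q+1$. Your relative Borel--Weil argument is unaffected: the dominance condition for $\tilde L$ still reduces to the single inequality $\langle\nu_m,\alpha_n\rangle\ge 0$ (all other simple roots of $\tilde L$ already lie in the Levi of $\p''$, on which $\nu_m$ is a character), and $\pi_*\mathcal L_{\nu_m}\cong G\times_{\tilde P''}L$ holds over $\PP^n$ just as it would over $\PP^1$. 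With this correction the proof goes through.
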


\begin{proof}
By construction, $Y$ is a representation of 
$U_{\mu_1}^{\p_1}\otimes...\otimes
U_{\mu_{m-1}}^{\p_{m-1}}\otimes U_{\nu_m}^{\p''(\ell,n\ell)}$.
So the statement follows from Corollary \ref{prlevi2}. 
\end{proof} 

\subsection{Application to $eH_{1,k,c}e$}

Let $\p_i$ be as in Theorem~\ref{thmain}. Fix weights $\mu_i:
\p_i \to \C$, with $\nu_m^n:=\langle \nu_m,\alpha_n\rangle$
being a nonnegative integer $q$
(where $\nu_i$ are related to $\mu_i$ as 
in Corollary \ref{prlevi2}). Let $k,c$ be related to $\mu_i$ as
in Theorem~\ref{thmain}. Let $Y$ be the representation 
constructed in the previous subsection. 

\begin{theorem}\label{genrep} The space
$Y^\g$ (the invariants in $Y$ under the diagonal $\g$-action) 
is a $eH_{1,k,c}e$-module of dimension $\binom{n+q}{n}$. 
\end{theorem}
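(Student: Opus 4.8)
```latex
The plan is to combine the main isomorphism (Theorem~\ref{thmain}) with the
representation-theoretic content of Corollary~\ref{consrep}, so that the problem
reduces to computing the dimension of a space of $\g$-invariants. First I would
observe that by Proposition~\ref{repre} the space $Y$ is a module over
$U_{\mu_1}^{\p_1}(\gs_{n\ell})\otimes\cdots\otimes U_{\mu_m}^{\p_m}(\gs_{n\ell})$,
so by Corollary~\ref{consrep} the reduced algebra
$eH_{1,k,c}e\cong\qr(U_{\mu_1}^{\p_1}\otimes\cdots\otimes U_{\mu_m}^{\p_m},
\gs_{n\ell}, \iota)$ acts naturally on the invariants $Y^\g$. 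Thus $Y^\g$ is
automatically an $eH_{1,k,c}e$-module, and the only thing left to establish is
its dimension.

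The dimension computation is where the geometric input of Lemma~\ref{openorb}
enters. Recall from the previous subsection that $Y$ was identified with the
space of $L$-valued regular functions on the formal neighborhood of $1$ in $G$,
i.e.\ with $(S\g)^*\otimes L$, precisely because $G$ acts freely on its open
orbit $X_*\subset X$ and a point $x\in X_*$ has a preimage $x'\in G^m$ whose
formal neighborhood maps isomorphically onto a neighborhood of the diagonal.
Taking $\g$-invariants under the diagonal left-translation action then amounts
to restricting to functions that are constant along the diagonal $G$-orbit
direction: because the action is free and transitive on the open orbit, the
diagonal $\g$ cuts out exactly the fiber $L$ over the base point. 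More precisely,
I would argue that $Y^\g$ is isomorphic to the quotient $L/\tilde{\mathfrak n}L$,
or dually to a highest-weight type subspace, whose dimension is governed by the
single finite-dimensional representation $L=L_{\nu_m}$ of the Levi subgroup
$\tilde L$ of $\tilde P''(\ell,n\ell)$.

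The key numerical step is then to identify $L_{\nu_m}$ explicitly and read off
its relevant dimension. By the construction in Corollary~\ref{prlevi2} and the
hypothesis $\nu_m^n=\langle\nu_m,\alpha_n\rangle=q$, the module $L$ is the
irreducible highest-weight representation of $\tilde L\cong GL_2$ (or the
$GL_2$-type factor determined by the simple root $\alpha_n$ along which $\tilde\p$
extends $\p$) with highest weight $q$ on that factor. The $n$-fold symmetric
structure built into the dimension vector $\alpha^{CM}=\alpha_\s+n\delta$ means
that the effective count is the number of degree-$n$ monomials in the
$(q+1)$-dimensional space, or equivalently the dimension of the space of
$\gs_{n\ell}$-invariants in the $n$-th symmetric power of the standard datum,
which equals the binomial coefficient $\binom{n+q}{n}$. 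I would verify this by a
direct dimension count: $Y^\g$ picks out the $L$-isotypic invariants, and the
multiplicity is exactly the number of ways of distributing $n$ indistinguishable
boxes among the $q+1$ weight spaces of the $GL_2$-representation $L_{\nu_m}$.

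The main obstacle, and the step requiring genuine care rather than bookkeeping,
is the identification $Y^\g\cong L/\tilde{\mathfrak n}L$ (or its dual) together
with the proof that this space is finite-dimensional of the asserted size. The
freeness and openness of the orbit from Lemma~\ref{openorb} guarantee that the
formal-neighborhood model of $Y$ is correct, but one must check that taking
diagonal $\g$-invariants commutes with the identification $Y\cong(S\g)^*\otimes L$
and genuinely collapses the infinite-dimensional $(S\g)^*$ factor down to the
base fiber $L$, leaving only the highest-weight-type subspace on which the count
$\binom{n+q}{n}$ is performed. Once that collapse is justified — essentially a
statement that the invariants of $(S\g)^*$ under translation are one-dimensional,
so the only surviving degrees of freedom live in $L$ restricted to the stabilizer
data — the binomial formula follows from the explicit description of $L_{\nu_m}$
as the symmetric-type $GL_2$-module of highest weight $q$.
```
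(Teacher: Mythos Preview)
Your argument for the module structure on $Y^\g$ is correct and matches the paper: Proposition~\ref{repre} plus Corollary~\ref{consrep} is exactly what is needed. The dimension computation, however, contains two concrete errors that obscure what is in fact a two-line argument.

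First, the identification of $Y^\g$ is simpler than you suggest. Since $G$ acts freely and transitively on $X_*$ (Lemma~\ref{openorb}), the orbit map identifies the formal neighborhood of $x$ in $X$ with the formal neighborhood of $1$ in $G$, and $Y$ becomes the space of $L$-valued formal functions there, with the diagonal $\g$ acting by left translation and trivially on $L$. The $\g$-invariants are therefore the constant functions, so $Y^\g\cong L$ itself---not a quotient $L/\tilde{\mathfrak n}L$ or a highest-weight subspace. There is nothing to collapse beyond the $(S\g)^*$ factor, and that collapse is immediate.

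Second, your identification of $L$ is wrong. The parabolic $\p''(\ell,n\ell)$ has Levi blocks of sizes $n-1,\,1,\,n,\,n,\,\ldots,\,n$ (the missing $f_i$ are $f_{n-1}$ and $f_{jn}$ for $1\le j\le\ell-1$). Adjoining $f_{\alpha_n}$ to form $\tilde\p''(\ell,n\ell)$ merges the singleton block $\{n\}$ with the block $\{n+1,\ldots,2n\}$, producing a Levi factor $GL_{n+1}$, not $GL_2$. On this factor the weight $\nu_m$ has the shape $(a,b,\ldots,b)$ with $a-b=q$ (because $\nu_m$ was already a character on the size-$n$ block, so $\langle\nu_m,\alpha_i\rangle=0$ for $i=n+1,\ldots,2n-1$), and the corresponding irreducible highest-weight module is $S^q\C^{n+1}$. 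On every other Levi factor $\nu_m$ remains a character, contributing a one-dimensional tensor factor. Hence $L\cong S^q\C^{n+1}$ and $\dim Y^\g=\dim L=\binom{n+q}{n}$ directly---no extra symmetric power, no distribution of boxes, and no appeal to the dimension vector $\alpha^{CM}$. Your numerology happened to land on the right binomial only because $\dim S^n\C^{q+1}=\dim S^q\C^{n+1}$.
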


\begin{proof} 
The fact that $Y^\g$ is a $eH_{1,k,c}e$-module follows from 
Corllary \ref{consrep}. The dimension formula follows from Lemma \ref{openorb}
(which implies that $\g$ acts simply transitively on the formal
neighborhood of $x$). Indeed, the lemma implies that $Y^\g$ can
be identified with $L$, and the dimension of $L$ is 
 $\binom{n+q}{n}$, since $L=S^q\C^{n+1}$. 
\end{proof} 

\begin{remark}
Let $x_i$ be the projection of $x$ to the i-th factor of the
product $G/P_1\times...\times G/P_{m-1}\times G/\tilde
P''(\ell,n\ell)$, and $H_i$ be the space of $L^*$-valued distributions 
on $G$ concentrated on the preimage of $x_i$, satisfying the
condition $R_z\psi=-\pi_{L^*}(z)\psi$, $z\in \p_i$. 
Then $Y=(H_1\otimes...\otimes H_m)^*$, hence 
$Y^\g=((H_1\otimes...\otimes H_m)_\g)^*$. 
Note that $H_i$ are generalized Verma modules for 
appropriate polarizations of $\g$.  
\end{remark}

Note that for generic parameters $k$ and $c$ there are no
finite-dimensional representations of $eH_{1,k,c}e$, they appear
on hyperplanes (see \cite{EM}). Theorem~\ref{genrep} constructs
them for a generic set of parameters at hyperplanes
$$\nu_m^n = \lambda(c)_\o + \frac{k(n-1)}2-1 \in \ZZ_+.$$

\section{Appendix: proof of Proposition~\ref{prlevi}}

First, we show this in the case of $2$ blocks of sizes $m_1, m_2$.
In this case, the space of characters $\p_1\to\C$ is
one-dimensional, hence we can assume that $\mu_1$ is a complex
number such that
$e_{ij}\mapsto\frac{m_2\mu_1}{m_1+m_2}\delta_{ij}$ for
$i=1,\dots,m_1$ and
$e_{ij}\mapsto\frac{-m_1\mu_1}{m_1+m_2}\delta_{ij}$ for
$i=m_1+1,\dots,m_1+m_2$. We have
$$\sigma = \left( {{1}\atop{m_2+1}}\ \  {{2}\atop{m_2+2}} \dots {{m_1}\atop{m_2+m_1}}\ \ {{m_1+1}\atop{1}}\ \dots  {{m_1+m_2}\atop{m_2}}
\right),$$ and $\mu_2=\sigma(\mu_1+\rho)-\rho=-\mu_1-m_1-m_2$.

Let $V$ be the tautological representation of
$\gs_r=\gs_{m_1+m_2}$. By Theorem~\ref{Gr-U}, we have
$U^{\p_1}_{\mu_1}(\g) \cong \mathcal{D}_{\mu_1}\left(\fl (m_1, m_1+m_2)\right)$, 
where 
$$\fl (m_1, m_1+m_2)=\inc(\C^{m_1},V)/GL_{m_1}$$ 
is the Grassmann variety. Due to Theorem \ref{Q-Gr} we have
$$\mathcal{D}_{\mu_1}\left(\fl (m_1, m_1+m_2)\right) =\qr(\D(\C^{m_1}\otimes V),
\gl_{m_1}, d\phi-\mu_1\tr),$$
 where $\gl_{m_1}$ acts on the first
tensor factor, and the map
$d\phi-\mu_1\tr$  is defined as
follows. Let $v_{i,k},\ i=1,\dots,m_1,\ k=1,\dots,m_1+m_2,$ be a
basis of $\C^{m_1}\otimes V$, and $v^*_{i,k}$ be the dual basis of
$\C^{m_1}\otimes V^*$, then
$$(d\phi-\mu_1\tr)(e_{ij})=\sum\limits_{k=1}^{m_1+m_2}v_{j,k}\partial_{v_{i,k}}-\mu_1\delta_{i,j}
\quad : \quad \gl_{m_1}\to\D(\C^{m_1}\otimes V).
$$

Let $\fou:\D(\C^{m_1}\otimes V)\tilde\to\D(\C^{m_1}\otimes V^*)$ be
the \emph{Fourier transform}, which sends $v_{i,k}$ to
$\partial_{v^*_{i,k}}$, and $\partial_{v_{i,k}}$ to $-v^*_{i,k}$.
We have
\begin{multline*}\fou \left((d\phi-\mu_1\tr)(e_{ij})\right )=
\fou\left(\sum\limits_{k=1}^{m_1+m_2}v_{j,k}\partial_{v_{i,k}}-\mu_1\delta_{i,j}\right)=\\
=\sum\limits_{k=1}^{m_1+m_2}-\partial_{v^*_{i,k}}v^*_{j,k}-\mu_1\delta_{i,j}=
\sum\limits_{k=1}^{m_1+m_2}-v^*_{j,k}\partial_{v^*_{i,k}}-(\mu_1+m_1+m_2)\delta_{i,j}=\\=-(d\phi-\mu_2\tr)(e_{ji}).\end{multline*}
Thus, the Fourier transform gives an isomorphism
\begin{multline*}\qr(\D(\C^{m_1}\otimes V), \gl_{m_1},
d\phi-\mu_1\tr)\cong\qr(\D(\C^{m_1}\otimes V^*), \gl_{m_1},
d\phi-\mu_2\tr)=\\=\mathcal{D}_{\mu_2}(\inc(\C^{m_1},V^*)/GL_{m_1}).\end{multline*}

Note that $\inc(\C^{m_1},V^*)/GL_{m_1}$ is naturally identified
with the space of surjective operators $V^*\to\C^{m_2}$ up to the
$GL_{m_2}$-action on the target space, and the latter is naturally
$\inc(\C^{m_2},V)/GL_{m_2}$ (this is an isomorphism of algebraic
varieties). Thus we have
\begin{multline*}U^{\p_1}_{\mu_1}(\g)=\qr(\D(\C^{m_1}\otimes V), \gl_{m_1},
d\phi-\mu_1\tr)\cong\\\cong
\mathcal{D}_{\mu_2}(\inc(\C^{m_2},V)/GL_{m_2})=\mathcal{D}_{\mu_2}(\fl (m_2,
m_1+m_2))=U^{\p_2}_{\mu_2}(\g).\end{multline*}

Note that all the isomorphisms above preserve the action of
$\gl_r=\gl_{m_1+m_2}$. This enables us to pass to the general case
as follows. Consider the parabolic subalgebra
$\p\subset\g$ generated by the subalgebra of upper-triangular
matrices and the Levi subalgebra of block-diagonal matrices with
the sizes of blocks equal to $m_1,\dots, m_{i-1}, m_i+m_{i+1},
\dots,m_k$. Let $P\subset G$ be the corresponding parabolic
subgroup. The variety 
$$SL_r/P_1=\fl (m_1, m_1+m_2, \dots,
m_1+\dots +m_{k-1}; r)$$ 
can be regarded as the homogeneous bundle
$SL_r*_P\fl (m_i ; m_i+m_{i+1})$, where $P$ acts on $\fl (m_i ;
m_i+m_{i+1})$ through the homomorphism $P\to GL_{m_i+m_{i+1}}$.
Respectively,  we have 
$$SL_r/P_2=
SL_r*_P\fl (m_{i+1}; m_i+m_{i+1}).$$

Consider the homogeneous bundle $SL_r*_{[P,P]}(\C^{m_i}\otimes
V)$, where $V$ is the tautological representation of
$SL_{m_i+m_{i+1}}$, and $P$ acts on $V$ through the homomorphism
$[P,P]\to SL_{m_i+m_{i+1}}$. Suppose
$$\mu_1=\sum\limits_{s=1}^{k-1}\mu_1^{(s)}\tr_s \quad  \mbox{where} \quad
\tr_{s}(p)=\tr\left(
p\sum\limits_{i=m_1+\dots+m_{s-1}+1}^{m_1+\dots+m_s}e_{ii}\right).$$
 Then
the algebra $U^{\p_1}_{\mu_1}(\g)=\mathcal{D}_{\mu_1}(SL_r*_P\fl (m_i ;
m_i+m_{i+1}))$ is the following quantum Hamiltonian reduction:
$$\qr(\D(SL_r*_{[P,P]}(\C^{m_i}\otimes V)),
\gl_{m_i}\oplus\p/[\p,\p],
(d\phi-(\mu_1^{(i)}-\mu_1^{(i+1)})\tr)\oplus(\E-\bar\mu_1^{(i)})),$$
where $\bar\mu_1^{(i)}=\sum\limits_{s\ne
i,i+1}\mu_1^{(s)}\tr_s+\frac{\mu_1^{(i)}m_i+\mu_1^{(i+1)}m_{i+1}}{m_i+m_{i+1}}(\tr_i+\tr_{i+1})$.
Here $GL_{m_i}$ acts on $\C^{m_i}$, and $P/[P,P]$ acts on the
homogeneous bundle by right shifts. Note that
$\bar\mu_1^{(i)}=\bar\mu_2^{(i)}$ and
$\mu_2^{(i)}-\mu_2^{(i+1)}=-(\mu_1^{(i)}-\mu_1^{(i+1)})-m_i-m_{i+1}$.

Since the Fourier transform $\fou: \D(\C^{m_i}\otimes
V)\tilde\to\D(\C^{m_i}\otimes V^*)$ is $\gl_{m_1+m_2}$-invariant,
there is a well-defined \emph{fiberwise Fourier transform}
$$\fou_P:\D(SL_r*_{[P,P]}(\C^{m_i}\otimes V))\tilde\to
\D(SL_r*_{[P,P]}(\C^{m_i}\otimes V^*)).$$ The computation in the
case of two blocks shows that this isomorphism gives an
isomorphism of quantum Hamiltonian reductions
\begin{multline*}\qr(\D(SL_r*_{[P,P]}(\C^{m_i}\otimes V)),
\gl_{m_i}\oplus\p/[\p,\p],
(d\phi-\mu_1^{(i)}\tr)\oplus(\E-\bar\mu_1^{(i)}))\cong\\\cong\qr(\D(SL_r*_{[P,P]}(\C^{m_i}\otimes
V^*)), \gl_{m_i}\oplus\p/[\p,\p],
(d\phi-\mu_2^{(i)}\tr)\oplus(\E-\bar\mu_1^{(i)})).\end{multline*}
The right-hand side is naturally isomorphic to \begin{multline*}
\qr(\D(SL_r*_{[P,P]}(\C^{m_{i+1}}\otimes V)),
\gl_{m_{i+1}}\oplus\p/[\p,\p],
(d\phi-\mu_2^{(i)}\tr)\oplus(\E-\bar\mu_2^{(i)}))=\\=\mathcal{D}_{\mu_2}(SL_r*_P\fl
(m_{i+1} ; m_i+m_{i+1}))=U^{\p_2}_{\mu_2}(\g). \end{multline*}

Proposition~\ref{prlevi} is proved.

\end{document}